\documentclass[A4, 12pts]{article}
\usepackage{graphicx} 

\usepackage[margin=2cm]{geometry}
\usepackage{amsmath}
\usepackage{amsfonts}
\usepackage{amsthm}
\usepackage{mathtools}
\usepackage{amssymb}
\usepackage{enumitem}
\usepackage{subcaption}
\usepackage{indentfirst}
\usepackage{hyperref}

\newtheorem{theorem}{Theorem}[section]
\newtheorem{lemma}[theorem]{Lemma}

\newtheorem{corollary}[theorem]{Corollary}

\newtheorem{question}[theorem]{Question}
\theoremstyle{remark}

\newtheorem*{remark*}{Remark}

\newcommand{\cF}{\mathcal F}

\title{Piercing boxes and products of trees}
\author{Wei Rao~\thanks{Laboratory of Discrete and Combinatorial Optimization, Moscow Institute of Physics and Technology, Dolgoprudny, Moscow region, 141700, Russia. (Email: \texttt{raowei1998@gmail.com})} \and Taehyun Eom~\thanks{Department of Mathematical Sciences, Gwangju Institute of Science and Technology (GIST), Jeonnam-Gwangju Special Metropolitan City, Republic of Korea (Email:\texttt{{taeheom0,minkikim}@gist.ac.kr}). Supported by a grant(RS-2025-00555526) from the National Research Foundation of Korea(NRF), funded by the Ministry of Science and ICT(MSIT) of the Korean Government.} \and Minki Kim\thanks{Corresponding Author: \texttt{minkikim@gist.ac.kr}.}~\footnotemark[2]}
\date{\today}

\begin{document}

\maketitle

\begin{abstract}
    We give a new upper bound on the piercing number $\tau(\mathcal{F})$ of a finite family $\mathcal{F}$ of axis-parallel rectangles in the plane when every $p$ members contain an intersecting pair. Namely, we prove $\tau(\mathcal{F}) = O(p(\log\log p)(\log\log\log p))$, improving the factor $(\log\log p)^2$ in the best previously known bound to $(\log\log p)(\log\log\log p)$. For each $d>2$, when $\mathcal{F}$ is a finite family of axis-parallel boxes in $\mathbb{R}^d$, this yields $\tau(\mathcal{F}) = O(p(\log p)^{d-2}(\log\log p)(\log\log\log p))$.

    We also study an analogous problem replacing $\mathbb{R}^d=\mathbb{R}\times\cdots\times\mathbb{R}$ with the Cartesian product $T_1\square\cdots\square T_d$ of trees $T_1,\ldots,T_d$, and boxes with objects of the form $S_1\square\cdots\square S_d$ where $S_i$ is a subtree of $T_i$. For a finite family of such objects under the same intersection assumption as above, we establish an upper bound $\tau(\mathcal{F})=O(p(\log p)^{d-1})$ for each $d$. We also construct examples showing that this upper bound cannot be improved in order of magnitude.
\end{abstract}

\section{Introduction}\label{sec:intro}
\noindent Throughout this paper, all logarithms without an indicated base are natural. For a given ground set $\Omega$, let $\cF$ be a finite family of nonempty subsets of $\Omega$. A~\emph{piercing set} for $\cF$ is a set $S\subseteq\Omega$ that meets every member of $\cF$. The \emph{piercing number} $\tau(\cF)$ is the minimal size of a piercing set for $\cF$. A \emph{packing} of $\cF$ is a subfamily of $\cF$ whose members are pairwise disjoint, and the \emph{packing number} $\nu(\cF)$ is the maximal size of a packing in $\cF$.
We study the relation between the piercing number and the packing number of boxes and Cartesian products of trees.

An \emph{axis-parallel box}, or simply a \emph{box}, in $\mathbb R^d$ is a product of $d$ nonempty intervals; intervals of length zero are allowed. A \emph{rectangle} is a box in \(\mathbb{R}^2\). We consider only closed and bounded boxes. It is well-known that $\tau(\mathcal{F})=\nu(\mathcal{F})$ when $\mathcal{F}$ is a finite family of intervals on a line. When $d=2$, it was conjectured by Wegner~\cite{wegner1965kombinatorisch} that $\tau(\mathcal{F})\leq 2\nu(\mathcal{F})-1$. This conjecture was recently disproved: for every $\varepsilon>0$, there exists a nonempty finite family $\cF$ of rectangles satisfying $\tau(\cF) \geq \left(\frac{5}{2}-\varepsilon\right)\nu(\mathcal{F})$ (see  \cite{ajwani2026counterexample, bal202664}). The weaker conjecture that $\tau(\mathcal{F})$ is linear in $\nu(\mathcal{F})$ remains open, and the previously best known bound is $\tau(\mathcal{B}) = O_d(\nu(\mathcal{B})(\log\nu(\mathcal{B}))^{d-2}(\log\log\nu(\mathcal{B}))^2)$ when $d\geq 2$~\cite{tomon2023lower}.
We give the following improvement.
\begin{theorem}\label{thm:upper-bound-for-boxes}
    There is a constant $C>0$ such that for every integer $d\geq2$, the following holds: every nonempty finite family $\mathcal B$ of axis-parallel boxes in $\mathbb R^d$ satisfies
    \[
        \tau(\mathcal B) \leq C\nu(\mathcal B)
        \binom{\lfloor\log_2\nu(\mathcal{B})\rfloor + d - 2}{d - 2}
        \left(\log \log(e^e+ \nu(\mathcal B)) \right) \Big(1+ \log \log\log (e^e+\nu(\mathcal B))\Big) .
    \]
    In particular, for fixed $d$ and $\nu\to\infty$,
    \[
        \tau(\mathcal B)
        =O_d\!\left(\nu(\mathcal B)(\log \nu(\mathcal B))^{d-2}(\log\log\nu(\mathcal B))(\log\log\log\nu(\mathcal B))\right).
    \]
\end{theorem}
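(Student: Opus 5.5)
We will prove the planar case first and then lift it to all dimensions by the standard halving recursion. For $d>2$, take the median hyperplane $x_d=m$ with respect to the $x_d$-coordinates of the boxes. This gives three kinds of boxes: those meeting the hyperplane, those entirely on the left, and those entirely on the right. The boxes meeting $x_d=m$ project to a family in $\mathbb R^{d-1}$ with packing number at most $\nu$. The two sides are handled recursively. The split is chosen so that each side has packing number at most about $\nu/2$; concretely, we choose $m$ so that $\nu$ of the left part and $\nu$ of the right part sum to at most $\nu$, and then recurse on the side with the larger packing number. Writing $f_d(\nu)$ for the bound, this gives
\[
f_d(\nu)\le f_{d-1}(\nu)+f_d(\nu_1)+f_d(\nu_2),\qquad \nu_1+\nu_2\le\nu .
\]
Unrolling over the $\lfloor\log_2\nu\rfloor$ levels produces the binomial coefficient $\binom{\lfloor\log_2\nu\rfloor+d-2}{d-2}$, provided the planar bound $f_2(\nu)=C\nu\,g(\nu)$ has $g$ monotone. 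Here $g(\nu)=\log\log(e^e+\nu)\,(1+\log\log\log(e^e+\nu))$.

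This bookkeeping needs care: on the sides the packing number can be as large as $\nu$ in unbalanced splits. The remedy is to choose $m$ as the smallest value for which the left-hand family has packing number at least $\nu/2$, so that both sides have packing number at most $\lceil\nu/2\rceil$. The crossing family is charged to dimension $d-1$. Summing, each of the $2^j$ nodes at depth $j$ carries a $(d-1)$-dimensional family whose packing numbers sum to at most $\nu$. Because $f_{d-1}(x)/x$ is increasing, induction yields the binomial sum $\sum_j\binom{j+d-3}{d-3}=\binom{\log_2\nu+d-2}{d-2}$.

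The core is the planar bound $\tau=O(\nu\log\log\nu\,\log\log\log\nu)$. We follow Tomon's framework in \cite{tomon2023lower}, as we understand it. First, by standard reductions we may assume all rectangles pairwise cross or are of a fixed type, after separating off rectangles pierced by a vertical or horizontal line arrangement: intervals on a line give $\tau=\nu$. In this setting Tomon bounds $\tau$ through a fractional/LP argument: $\tau^*$ is $O(\nu)$ (Chalermsook--Chuzhoy style), and rounding loses an $\epsilon$-net factor. For rectangles, the $\epsilon$-net size is $O(\frac1\epsilon\log\log\frac1\epsilon)$ by Aronov--Ezra--Sharir, which accounts for one $\log\log$ factor. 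Tomon's second $\log\log$ factor arises from a dyadic decomposition with $\log\log\nu$ scales, where at each scale one pays an additional $\log\log$ from the nets. Our plan is to replace the uniform treatment of scales by a weighted one. We assign to the family at scale $i$ a fraction $\nu_i$ of the budget, where $\sum\nu_i\le\nu$, and we bound the cost at that scale by $\nu_i\cdot\log(\text{scale index})$ rather than by $\nu_i\log\log\nu$. Summing over scales $i\le\log\log\nu$, we expect to obtain a total of $\nu\log\log\nu\cdot\log\log\log\nu$.

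The main obstacle is precisely this step: showing that the per-scale overhead depends only on the local parameter, such as the ratio of fractional to integral packing at that scale, and not on the global $\nu$. The first thing to try is to apply the net bound to each scale's subfamily with its own $\epsilon_i$, and to choose $\epsilon_i$ geometrically so that $\sum_i\epsilon_i^{-1}\log\log\epsilon_i^{-1}$ telescopes to the desired product. If this fails, a fallback is a recursive bootstrap: take a weaker planar bound $\tau\le C\nu h(\nu)$ and feed it back into the scale decomposition, aiming to improve $h$ from $(\log\log)^2$ to $\log\log\cdot\log\log\log$.
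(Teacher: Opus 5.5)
Your dimension-lifting step is essentially the paper's. A K\'{a}rolyi-type balanced split on the last coordinate gives $f(p,d)\le f(p,d-1)+\sum_i f(p_i,d)$ with $p_i\le p/2$ and $\sum_i p_i\le p$. Lemma~\ref{lem:dimension-lifting} then turns a planar bound $\nu g(\nu)$, with $g$ nondecreasing, into the factor $\binom{\lfloor\log_2\nu\rfloor+d-2}{d-2}$. One correction: each side must have packing number at most $\nu/2$, not $\lceil\nu/2\rceil$, or $\lfloor\log_2\nu\rfloor$ need not drop (take $\nu=3$). The sink argument of Lemma~\ref{lem:trees-balanced-separator} gives exactly $\le\nu/2$, and both sides are recursed on.

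The genuine gap is the planar bound $\tau=O(\nu\log\log\nu\,\log\log\log\nu)$. This is the entire content of the theorem, and your proposal only states an expectation for it. The accounting your plan rests on is also wrong:
\begin{itemize}
\item It is not known that $\tau^*=O(\nu)$ for rectangles. If it were, the Aronov--Ezra--Sharir nets alone would give $\tau=O(\nu\log\log\nu)$, which is stronger than the theorem.
\item The available input is $\tau^*\le A\nu\log\log(e^e+\nu)$ (Theorem~\ref{thm:boxes-fractional-packing}). The previous $(\log\log\nu)^2$ for $d=2$ is just this factor times the overhead $\log\log\tau^*$ paid when a strong $(1/\tau^*)$-net of the normalized fractional piercing is used as the piercing set. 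There are no $\log\log\nu$ scales to reweight.
\item Re-tuning $\varepsilon$ per scale does not help by itself. The members of measure about $1/\tau^*$ still need a $(1/\tau^*)$-net, and the $\log\log\frac1\varepsilon$ overhead of strong nets for rectangles cannot be improved in general (Pach--Tardos).
\end{itemize}

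The missing idea is to stop trying to hit all heavy rectangles. Instead, drive down only the \emph{packing number} of the unhit part, at a cost governed by $\tau^*/\nu$ rather than by $\tau^*$.

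The paper proves this in Lemma~\ref{lem:boxes-packing-reduction}: if $\mu(R)\ge1/r$ for all $R\in\cF$, then for every $k$ there is a set $S$ of at most $k$ support points of $\mu$ whose unhit subfamily has packing number at most $\frac{3e\,24^{1/4}}{1-\theta}\,r\,e^{-k\theta/(4r)}$. The proof is a greedy potential argument. The potential sums $\prod_i\mu(\{p_i\})$ over $4n$-tuples of support points whose $n$ quadruple bounding boxes are pairwise disjoint, each have measure at least $\theta/r$, and avoid $S$.
\begin{itemize}
\item Some point removes at least an $n\theta/r$ fraction of the potential.
\item The initial potential is at most $n!\,24^n3^{4n}/(4n)!$, because $4n$ points admit at most $3^{4n}$ partitions into quadruples with pairwise disjoint bounding boxes (Lemma~\ref{lem:num-well-organized}).
\item If $n$ pairwise disjoint members stayed unhit, the potential would be at least $n!\bigl(\frac{1-\theta}{4r}\bigr)^{4n}$ by Lemma~\ref{lem:choice-of-four}, a contradiction.
\end{itemize}

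One then iterates with $r=\tau^*(\cF_i)$, $k=\lceil c\,\tau^*(\cF_i)\rceil$, $c=\Theta(\log g(\nu(\cF)))$ and $g(t)=A\log\log(e^e+t)$. The unhit family satisfies $\nu(\cF_{i+1})\le\rho\,\tau^*(\cF_i)/g(\nu(\cF))$. Hence $\tau^*(\cF_{i+1})\le\nu(\cF_{i+1})\,g(\nu(\cF_{i+1}))\le\rho\,\tau^*(\cF_i)$. The geometric series then gives $\tau(\cF)=O(\tau^*(\cF)\log g(\nu(\cF)))=O(\nu\log\log\nu\,\log\log\log\nu)$.

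Your fallback bootstrap points in this direction, since it re-uses $\tau^*\le\nu g(\nu)$ on subfamilies. Without an exponential packing-reduction lemma of this kind, however, there is nothing to bootstrap.
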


We also study the piercing patterns of more general set systems. A \emph{tree} is a connected graph with no cycles and a \emph{path} is a tree where all vertices are adjacent to at most $2$ other vertices. We consider only finite trees. A connected subgraph of a tree is called a \emph{subtree}. It is known implicitly in \cite{gyarfas1970helly} and stated explicitly in \cite{alon2002covering, berger2004kkm} that $\tau(\cF)=\nu(\cF)$ for every finite family $\cF$ of subtrees in a tree. Given a finite family $\mathcal{F}$ of intervals on a line, by putting vertices on the endpoints of the intervals and connecting every two consecutive vertices by an edge, one can regard $\mathcal{F}$ as a family of subtrees in a path. From this point of view, it is natural to consider an analogue of Theorem~\ref{thm:upper-bound-for-boxes} by replacing each axis by a tree. A concrete description can be given by graph cartesian products.

The \emph{Cartesian (graph) product} of graphs $G_1,\ldots,G_d$, denoted by $G_1 \square G_2\square \cdots \square G_d$, is a graph on $V(G_1)\times\cdots\times V(G_d)$ such that two vertices $(u_1,\ldots,u_d)$ and $(v_1,\ldots,v_d)$ are adjacent if and only if $u_i=v_i$ for all $i\neq j$ for some $j$ and $u_j$ and $v_j$ are adjacent in $G_j$.

Throughout this paper, we consider only finite trees. Given trees $T_1,\ldots,T_d$, a \emph{subtree-product} of these trees is the Cartesian product $S_1\square\cdots \square S_d \subset T_1\square \cdots \square T_d$ where $S_i$ is a subtree of $T_i$. In this notion, a finite family of boxes in $\mathbb{R}^d$ can be regarded as a family of subtree-products of paths. To see this, for each $i$-th coordinate, consider the projections of the boxes in the family. Each projection of a box gives us an interval. Take all distinct endpoints of such intervals, and join consecutive endpoints in increasing order to form a path, say $P_i$. Now each box can be represented as a subtree-product of $P_1,\ldots,P_d$, and it is obvious from the construction that two boxes intersect if and only if the corresponding subtree-products intersect. For general subtree-products, we obtain an asymptotically optimal bound $\tau(\cF) = O(\nu(\cF)(\log\nu(\cF))^{d-1})$.

\begin{theorem}\label{thm:upper-bound-for-subtrees}
    For every integer $d \geq 1$ and nonempty finite family $\mathcal{F}$ of subtree-products of $d$ trees, we have $\tau(\cF) \leq \nu(\cF)
    \binom{\lfloor\log_2\nu(\cF)\rfloor + d - 1}{d - 1}
    $.
\end{theorem}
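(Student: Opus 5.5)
We argue by induction on $d$. Throughout, $T_1,\ldots,T_d$ are the trees and $\nu=\nu(\cF)$. For the base case $d=1$ the members of $\cF$ are subtrees of one tree, and the quoted fact $\tau=\nu$ for subtrees of a tree gives $\tau(\cF)\le\nu(\cF)=\nu\binom{\lfloor\log_2\nu\rfloor}{0}$. For $d\ge2$ we prove the stronger statement by a second induction on $\nu$. Writing $f_d(\nu)=\nu\binom{\lfloor\log_2\nu\rfloor+d-1}{d-1}$, the goal is the recursion
\[
f_d(\nu)\ \ge\ f_{d-1}(\nu)+2f_d(\lfloor \nu/2\rfloor),
\]
which is the usual divide-and-conquer recursion producing the binomial; here $f_d(0)=0$. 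It follows from Pascal's rule: with $k=\lfloor\log_2\nu\rfloor$ we have $\lfloor\log_2\lfloor\nu/2\rfloor\rfloor=k-1$, and $2\lfloor\nu/2\rfloor\binom{k-1+d-1}{d-1}+\nu\binom{k+d-2}{d-2}\le \nu\binom{k+d-1}{d-1}$.

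The key step is to find a good split in the last coordinate. Let $S^{(F)}_d$ denote the $d$-th factor of $F\in\cF$. We look for a vertex $v\in T_d$ such that each component $C$ of $T_d-v$ satisfies $\nu(\cF_C)\le\lfloor\nu/2\rfloor$, where $\cF_C=\{F: S^{(F)}_d\subseteq C\}$. Such a $v$ exists by the standard centroid-walking argument.
\begin{itemize}
\item Start anywhere. If some component $C$ has $\nu(\cF_C)>\nu/2$, move to the neighbour of $v$ in $C$.
\item Two different components of $T_d-v$ cannot both have packing number exceeding $\nu/2$. Their families are disjoint from each other in the last coordinate, so the union of the two packings would be a packing of size $>\nu$.
\item When we move from $v$ into $C$ and then to $u$, the component of $T_d-u$ containing $v$ has its family inside $\cF\setminus\cF_C$. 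Disjointness again forces its packing number below $\nu/2$.
\item Hence the walk never reverses, and since $T_d$ is finite it terminates at the desired $v$.
\end{itemize}

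With $v$ fixed, partition $\cF$ into $\cF_v=\{F: v\in S^{(F)}_d\}$ and the families $\cF_C$. Members of $\cF_v$ all meet the hyperplane $T_1\square\cdots\square T_{d-1}\square\{v\}$. Their traces there form a family of subtree-products of $d-1$ trees with packing number at most $\nu$, and traces intersect iff the members do. By the induction on $d$, $\cF_v$ is pierced by $f_{d-1}(\nu)$ points; we use monotonicity of $f_{d-1}$ in $\nu$.

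The main obstacle is the families $\cF_C$, whose number may be large. We must bound $\sum_C \tau(\cF_C)$, not just each term. Families in different components are pairwise disjoint, so $\sum_C\nu(\cF_C)\le\nu$. By induction on $\nu$ and superadditivity of $f_d$ (it is $\nu$ times a nondecreasing function of $\nu$), we get $\sum_C f_d(\nu_C)\le f_d(\sum_C\nu_C)$. This alone only gives $f_d(\nu)$, which is too weak, so we group the components. Order them and greedily merge them into groups, each with total packing number at most $\lfloor\nu/2\rfloor$. Since every single $\nu_C\le\lfloor\nu/2\rfloor$ and the total is at most $\nu$, a first-fit argument should give at most two groups, possibly with a third small leftover to absorb, which needs checking at parity boundaries. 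By superadditivity each group costs at most $f_d(\lfloor\nu/2\rfloor)$, giving total $f_{d-1}(\nu)+2f_d(\lfloor\nu/2\rfloor)\le f_d(\nu)$.

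If first-fit fails, the fallback is to choose $v$ more carefully so that the components split into two sides each of packing number at most $\lfloor\nu/2\rfloor$. This is the tree analogue of choosing the median point on a line, and the path case shows two sides suffice. I expect this grouping and rounding to be the delicate point.
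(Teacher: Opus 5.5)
The gap is the step you yourself flagged as delicate, and it is not a parity issue: $\sum_C\tau(\cF_C)\le 2f_d(\lfloor\nu/2\rfloor)$ cannot be obtained by either of your routes. Your base case, the centroid walk (a walking version of the paper's Lemma~\ref{lem:trees-balanced-separator}), and the reduction of $\cF_v$ to $d-1$ trees are all fine.

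For a counterexample, take $d=2$. Let $T_2$ be a star with centre $v$ and leaves $a,b,c$, and let $T_1$ have vertices $x,y$. Let $\cF$ consist of the six pairwise disjoint members $\{s\}\square\{t\}$ with $s\in\{x,y\}$ and $t\in\{a,b,c\}$. Then $\nu=6$, and the three components at $v$ have packing number $2$ each. Items $2,2,2$ do not fit into two bins of capacity $3$, so first-fit fails. Your fallback also fails: $v$ is the only admissible separator, since deleting a leaf leaves a component with packing number $4>3$. For odd $\nu$ the obstruction is even more basic: two groups of total at most $\lfloor\nu/2\rfloor$ hold at most $\nu-1$, so grouping fails whenever $\sum_C\nu_C=\nu$. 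Allowing a third group while still charging each group $f_d(\lfloor\nu/2\rfloor)$ is too weak. In the example it gives $f_1(6)+3f_2(3)=24>18=f_2(6)$.

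The missing observation is that no grouping is needed; the two-term recursion with the factor $2$ is the wrong target. Let $k=\lfloor\log_2\nu\rfloor$. Since each $\nu_C\le\nu/2$, you have $\lfloor\log_2\nu_C\rfloor\le k-1$. The induction hypothesis therefore gives $\tau(\cF_C)\le\nu_C\binom{k+d-2}{d-1}$. This bound is linear in $\nu_C$, with a coefficient that does not depend on $C$. Summing and using $\sum_C\nu_C\le\nu$ gives $\sum_C\tau(\cF_C)\le\nu\binom{k+d-2}{d-1}$. Adding $f_{d-1}(\nu)=\nu\binom{k+d-2}{d-2}$ and applying Pascal's rule yields exactly $f_d(\nu)$, however many components there are. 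Your superadditivity step $\sum_C f_d(\nu_C)\le f_d(\sum_C\nu_C)$ is precisely where the information $\nu_C\le\nu/2$ was discarded. This linear per-component bound is the paper's argument: Lemma~\ref{lem:dimension-lifting} applied to the recurrence~\eqref{eq:tree-recurrence}.
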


\begin{theorem}\label{thm:lower-bound-for-subtrees}
    For every integer $d\geq1$, there is a constant $c_d>0$ such that the following holds: for every positive integer $h$, there is a finite family $\mathcal{F}$ of subtree-products of $d$ trees such that $\nu(\cF)=2^{h+1}$ and $\tau(\cF)\geq c_d 2^{h+1}(h+1)^{d-1}$.
\end{theorem}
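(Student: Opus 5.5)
The plan is to build the family from complete binary trees, indexed by level vectors, and to bound $\tau$ from below by a counting (fractional) argument while bounding $\nu$ from above by a tree-specific argument. I have not settled the key gadget, so the plan below may fail at that step. For $d=1$ take $2^{h+1}$ pairwise disjoint subtrees of a path, so $\tau=\nu=2^{h+1}$. For $d\ge2$ let each $T_i$ be a complete binary tree of height roughly $h$ (possibly subdivided, or with pendant paths attached, to make room for the gadgets below). For every level vector $(l_1,\dots,l_d)$ of nonnegative integers with $l_1+\dots+l_d=h$, and every choice of vertices $v_i$ at depth $l_i$ in $T_i$, put into $\mathcal F$ one subtree-product $S(v_1)\square\cdots\square S(v_d)$. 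The family then has $N=2^h\binom{h+d-1}{d-1}$ members. Here $S(v)$ is a subtree of $T_i$ attached to $v$ that must be ``large enough'' to create intersections and ``thin enough'' that a single point meets few members. My first candidate is the union of $v$ with a fixed root-to-leaf spine of its descendant subtree, possibly enlarged by a short path towards the root.

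The lower bound on $\tau$ comes from a degree count. If every point of $T_1\square\cdots\square T_d$ lies in at most $\Delta=O_d(1)$ members, then every piercing set has size at least $N/\Delta$, which is $\ge c_d2^{h+1}(h+1)^{d-1}$ as required. This is the same as exhibiting the uniform fractional packing $1/\Delta$ and using $\tau\ge\tau^*=\nu^*$. Checking the degree bound should be a direct case analysis on the relative position of a point $x_i$ and the vertices $v_i$, coordinate by coordinate. Because $\sum l_i=h$ is fixed, one coordinate in which $x$ is ``deep'' forces another in which it must be ``shallow.''

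The upper bound $\nu(\mathcal F)\le2^{h+1}$ (with equality arranged by exhibiting an explicit packing, e.g.\ all members with $l_1=h$) is the crux, and I expect it to be the main obstacle. Two members are disjoint iff they are disjoint in some coordinate, i.e.\ iff $S(v_i)\cap S(w_i)=\emptyset$ for some $i$. I would prove the bound by induction on $h$, splitting according to the root's children in one coordinate. Members whose first-coordinate vertex lies in the left subtree are handled by induction with $h-1$, and likewise for the right subtree. Members with $l_1=0$ meet all others in coordinate $1$, so they form a lower-dimensional instance, handled by induction on $d$. One then needs a charging argument showing that a packing cannot profit from both halves and the root layer simultaneously, yielding the recursion $\nu(h)\le 2\nu(h-1)$ up to additive terms absorbed in $2^{h+1}$.

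If the spine gadget fails this recursion, the fallback is to tune $S(v)$ so that any two members with comparable $v_i$ meet in coordinate $i$. Then a packing must be an antichain-type family in a product of binary tree orders, whose size is bounded by a weighted LYM-type inequality $\sum 2^{-\sum l_i}\le 2$. The degree bound from the previous step would then have to be re-verified for the new gadget.
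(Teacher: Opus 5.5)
There is a genuine gap, and it lies in your lower-bound mechanism itself, not only in the unsettled gadget. A degree count with bounded point-degree $\Delta$ is a fractional bound, $\tau\ge\tau^*=\nu^*\ge N/\Delta$. Combined with $\nu\le 2^{h+1}$, you would be proving $\nu^*/\nu=\Omega_d((h+1)^{d-1})$, an integrality gap of order $(\log\nu)^{d-1}$ for the packing LP of subtree-products. That is a much stronger statement than the theorem, and nothing in your plan produces it.

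It also works directly against keeping $\nu$ small. By the Helly property of subtrees (applied coordinatewise), bounded point-degree makes the intersection graph $K_{\Delta+1}$-free. On the other hand, $\nu\le 2^{h+1}$ forces, by Caro--Wei, average degree at least $\binom{h+d-1}{d-1}/2-1$.

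Your spine gadget shows the tension concretely. Take $S(v)$ to be a fixed downward path from $v$. Two members meet only if, in every coordinate, the deeper vertex lies on the shallower one's spine. So a member has at most one neighbour at each other level vector $l'$. For a random member at level vector $l$, that neighbour exists with probability $2^{-s}$, where $s=\sum_i(l_i-l'_i)^+$. Hence the average degree is $O_d(1)$ and $\nu=\Omega_d(2^h h^{d-1})$; for $d=2$ you even get $\nu\ge N/3$. A bounded upward extension of $S(v)$ only changes the constants.

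Your fallback fails in the opposite direction. Suppose comparable $v_i$ always meet. Then the $\binom{h+d-1}{d-1}$ members indexed by the ancestor tuples of one fixed leaf tuple pairwise intersect, hence share a point. So $\Delta\ge\binom{h+d-1}{d-1}$ and $N/\Delta\le 2^h$. With $S(v)=T[v]$ the family is even pierced by $O_d(2^h)$ points. For instance, when $d=2$ the $2^h$ points $(x,\mathrm{rev}(x))$ suffice, where $\mathrm{rev}$ reverses a binary string.

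The paper avoids any fractional gap; in its family one has $\tau^*=\nu$. It takes $\mathcal F_h$ to be \emph{all} subtree-products in $d$ copies of $T^h$ containing at least $2^{h(d-1)-1}$ leaf tuples. Then $\nu\le 2^{h+1}$ is a one-line volume count. The uniform weight $2^{1-h(d-1)}$ on leaf tuples shows $\tau^*(\mathcal F_h)=2^{h+1}$, so no degree or LP argument can give the lower bound for this family.

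The paper's lower bound is instead integral. For each $C=\prod_i T[u_i]$ with $\sum_i\ell(u_i)=h$, and each $z=(x_1,\dots,x_d)\in C$, it sets $w_C(z)=\min_i 2^{\ell(u_i)-\ell(x_i)}$. Suppose a piercing set $P$ had $\sum_{z\in P}w_C(z)<1/2$. For each $z\in P\cap C$, carve the branch $T[x_j]$ out of $C$ in a minimizing coordinate $j$. Each carving removes at most a $w_C(z)$-fraction of the leaf tuples of $C$ and keeps $(u_1,\dots,u_d)$. What remains is a member of $\mathcal F_h$ missing $P$, a contradiction. Together with $\sum_C w_C(z)\le K_d$ for every point $z$, double counting gives $|P|\ge 2^{h-1}\binom{h+d-1}{d-1}/K_d$.

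The missing idea in your proposal is exactly this combination:
\begin{itemize}
\item a family closed under carving out branches;
\item $\nu$ controlled by volume;
\item a weighting that charges deep points little, precisely because they can be carved around cheaply.
\end{itemize}
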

Theorem~\ref{thm:upper-bound-for-subtrees} is analogous to the same upper bound for boxes given by K\'{a}rolyi~\cite{k1991point}, and the proof only requires a slight modification. However, Theorem~\ref{thm:lower-bound-for-subtrees} shows that this upper bound is asymptotically optimal for subtree-products unlike the case of boxes, while $\tau=\nu$ holds for both families of intervals on a line and families of subtrees in a tree. For boxes, the current best lower bound when $d \geq 3$ is $\tau = \Omega_d(\nu \left(\frac{\log\nu}{\log\log\nu}\right)^{d-2})$ by Tomon~\cite{tomon2023lower}.

\section{Preliminaries}
\subsection{Fractional piercing and fractional packing}
\noindent Given a finite family $\mathcal{F}$ of nonempty sets, a \emph{fractional piercing} for $\cF$ is a function $w:\Omega\to[0,\infty)$ such that for every member $A \in \cF$, $\sum_{a\in A}w(a)$ is at least $1$. The minimum value of \(\sum_{p\in\Omega}w(p)\) over all fractional piercings is the \emph{fractional piercing number}, denoted by $\tau^*(\cF)$. That is,
\[
 \tau^*(\cF)=\min\left\{
  \sum_{p\in\Omega}w(p): w:\Omega\to[0,\infty), \displaystyle\sum_{p\in F}w(p)\geq1\quad \forall F\in\cF\right\}.
\]
A \emph{fractional packing} of $\cF$ is a function \(x:\mathcal{F}\to[0,\infty)\) such that for every point \(p\in \Omega\), $\sum_{F\in\cF, F\ni p}x(F)$ is at most $1$. The maximum value of \(\sum_{F\in\cF}x(F)\) over all fractional packings is the \emph{fractional packing number}, denoted by $\nu^*(\cF)$. That is,
\[
 \nu^*(\cF)=\max\left\{
  \sum_{F\in\cF}x(F): x :\cF\to [0,\infty),\displaystyle\sum_{F\ni p}x(F)\leq1  \quad\forall p\in\Omega\right\}.
\]

Let $\mathcal{F}^{\cap} = \{\mathcal{H}\subseteq\mathcal{F}: \bigcap_{A\in\mathcal{H}}A\neq\varnothing\}$.
For each $\mathcal{H}\in\mathcal{F}^\cap\setminus\{\varnothing\}$, take an element $a_\mathcal{H}$ in~$\bigcap_{A\in\mathcal{H}}A$, and let $V= \{a_\mathcal{H} : \mathcal{H} \in \mathcal{F}^\cap \setminus\{\varnothing\} \}$. The family $\mathcal{F}'=\{A \cap V: A\in\mathcal{F}\}$ has the same intersection pattern as $\mathcal{F}$ since for every nonempty subfamily $\mathcal{H}$ of $\mathcal{F}$, 
\[\bigcap_{A\in\mathcal{H}}A\neq\varnothing \iff \bigcap_{A\in\mathcal{H}}A\cap V\neq\varnothing.\]
It immediately follows that restricting the ground set to the finite set $V$ preserves the piercing, packing, fractional piercing, and fractional packing numbers. Thus there exists an optimal fractional piercing with finite support, and by applying the finite linear programming duality, we have
\[
 1\leq\nu(\cF)\leq\nu^*(\cF)=\tau^*(\cF)\leq\tau(\cF)
\]
when $\mathcal{F}$ itself is nonempty. We define all four parameters above to be zero for the empty family.

Moreover, when $\mathcal{F}\neq\varnothing$, normalizing an optimal fractional piercing $w$ supported on $V$ gives a finitely supported probability measure $\mu$ such that 
\[
\mu(A)
=\sum_{v\in A\cap V}\frac{w(v)}{\tau^*(\cF)}
\geq\frac1{\tau^*(\mathcal F)}\qquad\forall A\in\cF.
\]

The proof of Theorem~\ref{thm:upper-bound-for-boxes} relies on the following statement, which was implicitly proved using Lemma~2 and Equation~(2) in Section~5 of \cite{correa2015independent}.
\begin{theorem}\label{thm:boxes-fractional-packing}
    There is a constant $A\geq1$ such that for every nonempty finite family $\mathcal R$ of rectangles, $\tau^*(\mathcal R)\leq A \nu(\mathcal R) \log\log(e^e +\nu(\mathcal R))$.
\end{theorem}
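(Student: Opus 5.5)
We will derive Theorem~\ref{thm:boxes-fractional-packing} from the Correa et al. approach: the task is to bound $\tau^*(\mathcal R)=\nu^*(\mathcal R)$ by producing a large independent set from an optimal fractional packing. Set $k=\nu(\mathcal R)$ and let $x$ be an optimal fractional packing, so $\sum_R x(R)=\nu^*$. We may assume $\mathcal R$ is finite with corners in general position after a small perturbation, which preserves the intersection graph. The target is to show $\nu^*\le A k\log\log(e^e+k)$.

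\textbf{Step 1 (reduction to a structured family).} Since rectangles pairwise intersect only in ``corner'' or ``crossing'' configurations, two intersecting rectangles either have a corner of one inside the other, or they cross. Using the fractional packing constraint at corner points, we show that the total weight $\sum_{R\cap R'\neq\varnothing,\ \text{corner type}} x(R)x(R')$ is at most $4\sum_R x(R)$. The reason is that for each $R'$, the rectangles containing a fixed corner of $R'$ have total weight at most $1$. Crossing pairs are controlled separately. This is the content of Lemma~2 of \cite{correa2015independent}: a weighted intersection-graph bound of the form ``average weighted degree $=O(1)$'' for corner intersections.

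\textbf{Step 2 (dealing with crossings).} We split the rectangles, possibly after a rescaling or dyadic classification by aspect ratio or by the scale of side lengths, into $O(1)$ or a controlled number of classes. Within each class, crossing intersections behave like intersections of a family with bounded ``depth'' structure, as in Equation~(2) of Section~5. The $\log\log k$ factor arises here: the number of relevant scale classes that can interact is only $O(\log\log k)$, for instance via a doubly exponential grouping of scales in which crossing rectangles must lie in nearby groups. We would aim for an intermediate inequality
\[\sum_{R\cap R'\neq\varnothing}x(R)x(R')\le C\,\nu^*\log\log(e^e+k),\]
or, more precisely, a per-class version in which the fractional weight in a class is at most $C k\log\log(e^e+k)$.

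\textbf{Step 3 (rounding).} With an average weighted degree $D=O(\log\log(e^e+k))$ in hand, we apply the weighted Turán/Caro--Wei bound (or random sampling followed by deletion, with each $R$ sampled with probability proportional to $x(R)/D$). This gives an independent set of expected size at least $c\,\nu^*/D$. Since any independent set has size at most $k$, we conclude $\nu^*\le C' k\log\log(e^e+k)$. The $e^e$ shift handles small $k$, and the constant $A\ge1$ can be absorbed.

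The main obstacle is Step 2: obtaining the $\log\log$ rather than $\log$ factor for crossing pairs. First we would try to reproduce the Correa et al. argument literally, taking their Equation~(2) as the bound on crossing weight and their Lemma~2 for corner weight. Then we would verify that their independent-set statement, phrased for maximum-weight independent sets relative to the LP, translates to the LP value bound above via duality $\nu^*=\tau^*$ established earlier.
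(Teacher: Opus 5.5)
\textbf{There is a genuine gap.} Note first that the paper does not prove this statement; it imports it from Section~5 of \cite{correa2015independent}. Read as a proof, your proposal breaks down exactly where the real content lies.

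Step~1 is correct but routine: corner-type pairs carry total weight at most $4\nu^*$, by the packing constraint at the four corners of each rectangle. Step~3 is also correct but routine: sample with probability $x(R)/D$ and delete one rectangle from each conflicting pair.

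Step~2 contains no argument. Worse, the inequality it aims for, $\sum_{R\cap R'\neq\varnothing}x(R)x(R')\le C\nu^*\log\log(e^e+k)$ for an optimal $x$, is false. Take pairwise disjoint long thin horizontal rectangles $H_1,\dots,H_m$ and pairwise disjoint long thin vertical rectangles $V_1,\dots,V_m$, each $H_i$ crossing each $V_j$, and put $x\equiv 1/2$.
\begin{itemize}
\item Every point lies in at most one rectangle of each kind, so $x$ is a fractional packing of value $m$.
\item It is optimal, because the $m$ points of $H_i\cap V_i$ pierce the family; also $\nu=m$.
\item Yet the weighted sum over intersecting pairs, all of which cross, is $m^2/4$.
\end{itemize}
So crossing pairs cannot be controlled by any ``average weighted degree'' bound, and a Tur\'an/Caro--Wei rounding of an arbitrary optimal $x$ cannot work. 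The ``per-class version'' you mention is just the theorem restated, and nothing in the proposal supports the doubly exponential grouping of scales.

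The missing idea is that crossing intersections must be handled structurally rather than by counting. Suppose every intersecting pair of a family $\mathcal S$ (in general position) crosses. Orient $R\to R'$ whenever the $x$-projection of $R$ strictly contains that of $R'$. This orientation is transitive, so the intersection graph is a comparability graph, hence perfect. Pairwise intersecting boxes share a point, so the clique constraints coincide with the point constraints, and therefore $\nu(\mathcal S)=\nu^*(\mathcal S)$.

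The cited argument is an LP-rounding analysis going back to Chalermsook and Chuzhoy. It samples from $x$ and removes corner conflicts as in your Step~1. It then has to show that passing to the surviving crossing family loses only a $\log\log$ factor. That requires controlling the depth of the sample much more carefully than a union bound over points allows. It must also express this loss in terms of $\nu$, rather than the number of rectangles or of relevant points.

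None of this appears in your proposal. What you have is effectively ``cite Correa et al.'', which is what the paper does, and the intermediate claims you add along the way would not hold up.
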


\subsection{Balanced separator and dimension-lifting lemmas}\label{prelim:generalupperbound}
The following lemmas are analogous to the proof arguments for the upper bound for boxes by K\'{a}rolyi~\cite{k1991point}. We first show the existence of a balanced separator in a tree with respect to any nonnegative function that is superadditive on disjoint sets.
\begin{lemma}\label{lem:trees-balanced-separator}
    Let $T$ be a tree on a finite vertex set $V$.
    Consider a function $f:2^V\to\mathbb{R}_{\geq0}$ satisfying
    \begin{align}\label{eq:superadditivity}f(A)+f(B) \leq f(A\cup B)\qquad\forall A,B\in 2^V\text{ s.t. }A\cap B = \varnothing.\end{align}
    Then there is a vertex $v$ in $T$ such that the vertex sets $C_1,\ldots,C_m$ of the components of $T-v$ satisfy
    \begin{align}\label{eq:balanced-separator}f(C_i) \leq \frac{f(V)}{2} \;\;\;\text{and}\;\;\;\sum_{i=1}^{m}f(C_i)\leq f(V).\end{align}
\end{lemma}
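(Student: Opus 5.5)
The second inequality in \eqref{eq:balanced-separator} holds for every vertex $v$, so the work is in finding $v$ with $f(C_i)\le f(V)/2$ for all $i$. For the second inequality, fix any $v$. The components $C_1,\ldots,C_m$ of $T-v$ are pairwise disjoint, so repeated use of \eqref{eq:superadditivity} gives
\[
\sum_{i=1}^m f(C_i)\le f\Bigl(\bigcup_i C_i\Bigr)=f(V\setminus\{v\}).
\]
Applying \eqref{eq:superadditivity} once more to the disjoint sets $V\setminus\{v\}$ and $\{v\}$, and using $f(\{v\})\ge 0$, yields $f(V\setminus\{v\})\le f(V)$. (Superadditivity with nonnegativity also gives monotonicity, $f(A)\le f(B)$ for $A\subseteq B$, which I will use below.)

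For the first inequality I would use the standard centroid walk. Start at an arbitrary vertex $v_0$. If every component of $T-v_0$ has $f$-value at most $f(V)/2$, stop. Otherwise there is a component $C$ with $f(C)>f(V)/2$. It is unique, because two disjoint such components would give $f(C)+f(C')>f(V)$, contradicting the bound above. Let $v_1$ be the unique neighbor of $v_0$ in $C$ and move to it.

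The main obstacle is showing that this walk terminates, i.e.\ that it never steps back. Look at $T-v_1$. The component containing $v_0$ is exactly $V\setminus C$. Since $C$ and $V\setminus C$ are disjoint,
\[
f(V\setminus C)\le f(V)-f(C)<f(V)/2,
\]
so the walk will not return to $v_0$. Any other heavy component of $T-v_1$ lies strictly inside $C\setminus\{v_1\}$. Define the potential as the heavy component at each step, viewed as a vertex set. This set strictly shrinks along the walk: each new heavy component is contained in $C\setminus\{v_1\}$, a proper subset of the previous one. Hence the walk visits distinct vertices, and since $V$ is finite it stops at some $v$ where all components of $T-v$ satisfy $f(C_i)\le f(V)/2$. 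Together with the first paragraph, this $v$ satisfies both inequalities in \eqref{eq:balanced-separator}.
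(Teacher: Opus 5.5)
Your proof is correct and is essentially the paper's argument. The paper orients every edge of $T$ toward the component of $T-e$ with larger $f$-value and takes a sink; your walk follows exactly such edges. The only differences are that you certify termination by the strictly shrinking heavy component rather than by the absence of directed cycles in an oriented tree, and that your stopping rule encodes $f(C_i)\le f(V)/2$ directly instead of going through a global orientation with arbitrary tie-breaking.
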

\begin{proof}
    For every edge $e$ of $T$, $T-e$ has exactly two components, say the vertex sets are $A_e$ and $B_e$ and assume $f(A_e)\leq f(B_e)$. By \eqref{eq:superadditivity}, we have $f(A_e) + f(B_e) \leq f(V)$ for all $A_e$ and $B_e$. It follows that $f(A_e)\leq f(V)/2$. We orient $e$ toward $B_e$.
    The resulting orientation of the tree $T$ should have a sink $v$. Otherwise, following outgoing edges would eventually produce a directed cycle. We claim that $v$ is the desired vertex.

    Let $C_1,\ldots,C_m$ be the vertex sets of the components of $T-v$.
    By the choice of the orientation and the assumption that $v$ is a sink, we have $f(C_i) \leq f(V)/2$ for every $i$. Moreover, by \eqref{eq:superadditivity}, we have
    \[\sum_{i=1}^m f(C_i) \leq f(\bigcup_{i=1}^mC_i) = f(V\setminus\{v\}) \leq f(V).\]
    The last inequality follows from the monotonicity implied by \eqref{eq:superadditivity}: $f(A) \leq f(A)+f(B\setminus A) \leq f(B)$ whenever $A\subset B$.
\end{proof}

    Let $d\geq2$ and consider a finite family $\cF=\{S_1^{(j)}\square\cdots\square S_d^{(j)}: 1\leq j \leq n\}$ of subtree-products in the Cartesian product $T_1\square\cdots\square T_d$ of trees, where $S_i^{(j)}$ is a subtree of $T_i$. 
    Suppose $V$ is the vertex set of $T_d$ and for each $W\subseteq V$, let $\cF(W) =\{S_1^{(j)}\square\cdots\square S_d^{(j)}: 1\leq j \leq n, V(S_d^{(j)}) \subseteq W\}$. Define $f:2^V\to\mathbb{R}_{\geq0}$ by $f(W) = \nu(\cF(W))$. Clearly, this function $f$ satisfies \eqref{eq:superadditivity}. By Lemma~\ref{lem:trees-balanced-separator}, there exists a $v$ of $T_d$ satisfying \eqref{eq:balanced-separator} with respect to $f$. 
    Let $C_1,\ldots,C_m$ be the vertex sets of the components of $T_d-v$.
    Then $\cF_v = \cF\setminus\left(\bigcup_{i=1}^m\cF(C_i)\right)$ consists of all subtree-products where $S_d^{(j)}$ contains $v$.
    Therefore, for $\cF' = \{S_1^{(j)}\square\cdots\square S_{d-1}^{(j)}: 1\leq j\leq n, v\in S_d^{(j)}\}$, we have $\tau(\cF_v) = \tau(\cF')$, and hence the following holds:
    \begin{align}\label{eq:tree-recurrence}
    \tau(\cF) \leq \tau(\cF') + \sum_{i=1}^m{\tau(\cF({C_i}))},\;\;\;\nu(\cF(C_i))\leq\frac{\nu(\cF)}{2},\;\;\;\sum_{i=1}^{m}\nu(\cF(C_i))\leq\nu(\cF).
    \end{align}
    This recurrence relation gives an upper bound on $\tau(\cF)$ in terms of $\nu(\cF)$ through the following lemma.    
\begin{lemma}\label{lem:dimension-lifting}
    Let $k\in\mathbb{N}$, and consider a function $f:\mathbb{N}\times\mathbb{N}\to\mathbb{R}_{\geq0}$ and a nondecreasing function $g:\mathbb{N}\to\mathbb{R}_{\geq0}$. Suppose that $f(p,k)\leq pg(p)$ for every $p\geq 1$. Suppose also that for every $p\geq 1$ and $d>k$, there exists positive integers $p_1,\ldots,p_m$ (not necessarily $m\neq0$) such that $p_i\leq p/2$ for each~$i$, $\sum_{i=1}^m p_i \leq p$, and
    \begin{align}\label{eq:recurrence}f(p,d) \leq f(p,d-1)+\sum_{i=1}^{m}f(p_i,d)\;\;\forall p\geq1, d>k.\end{align}
    Then, for every $p\geq1$ and $d\geq k$,
    \[f(p,d) \leq p g(p) \binom{\lfloor\log_2 p\rfloor +d-k}{d-k}.\]
\end{lemma}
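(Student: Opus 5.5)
We argue by induction on $d$, starting from $d=k$, and for fixed $d>k$ by strong induction on $p$. For $d=k$ the claim is just the hypothesis $f(p,k)\le pg(p)$, because $\binom{\lfloor\log_2 p\rfloor}{0}=1$. So fix $d>k$. Assume the bound holds for $d-1$ and every $p\ge1$, and for $d$ and every $p'<p$. Write $h=\lfloor\log_2 p\rfloor$ and $B(p,d)=\binom{h+d-k}{d-k}$.

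Apply the recurrence \eqref{eq:recurrence} to get $p_1,\dots,p_m$. Each $p_i\le p/2<p$, so the inductive hypothesis covers each $f(p_i,d)$. Moreover $\lfloor\log_2 p_i\rfloor\le\lfloor\log_2(p/2)\rfloor=h-1$. Since $g$ is nondecreasing and $p_i\le p$, we have $g(p_i)\le g(p)$. Hence
\[
\sum_{i=1}^m f(p_i,d)\le \sum_{i=1}^m p_i\, g(p)\binom{h-1+d-k}{d-k}\le p\,g(p)\binom{h-1+d-k}{d-k},
\]
using $\sum_i p_i\le p$. The case $h=0$, i.e.\ $p=1$, needs a moment of care. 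Then $p_i\le 1/2$ is impossible for positive integers, so $m=0$ and the sum is empty. The same conclusion also follows formally by reading $\binom{d-k-1}{d-k}=0$.

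The first term is handled by the outer induction on $d$: $f(p,d-1)\le p g(p)\binom{h+d-1-k}{d-1-k}$. Adding the two estimates and applying Pascal's rule,
\[
\binom{h+d-k-1}{d-k-1}+\binom{h+d-k-1}{d-k}=\binom{h+d-k}{d-k},
\]
gives exactly $f(p,d)\le p g(p) B(p,d)$, which closes the induction.

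I expect no step to be a real obstacle. The only things to check are the ordering of the double induction and the base case $p=1$, where the recurrence must be read with $m=0$. That reading is consistent with the hypotheses: the statement allows $m=0$, and when $p=1$ no positive integer satisfies $p_i\le p/2$, so $m=0$ is forced.
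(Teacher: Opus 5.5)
Your proposal is correct and matches the paper's proof essentially step for step. Both use induction on $d$ with an inner strong induction on $p$, bound the recursive terms via $\lfloor\log_2 p_i\rfloor\le\lfloor\log_2 p\rfloor-1$, $\sum_i p_i\le p$ and monotonicity of $g$, and finish with Pascal's rule; your $p=1$ case (forcing $m=0$, so $f(1,d)\le f(1,d-1)\le g(1)$) is exactly the paper's separate treatment.
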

\begin{proof}
    We use induction on $d$, and on $p$ for each fixed $d>k$. The case $d=k$ follows from the assumption.    
    
    Fix $d>k$ and assume the bound for $f(p,d')$ holds for all $k\leq d' < d$. If $p=1$, then there is no positive integer at most $p/2$. Therefore, the recurrence relation gives $f(p,d) \leq f(1,d-1) \leq g(1)$, where the second inequality follows from the induction hypothesis.
    Now suppose $p\geq2$, and assume the bound for $f(p',d)$ holds for all positive integers $p'<p$. Each $p_i$ with $p_i \leq p/2$ clearly satisfies $\lfloor\log_2 p_i \rfloor \leq \lfloor\log_2 p \rfloor -1$.
    Since we also have $\sum_{i=1}^m p_i \leq p$ and $g(p_i) \leq g(p)$, by applying the induction hypothesis to $f(p,d-1)$ and $f(p_i, d)$, we obtain
    \begin{align*}
        f(p,d) &\leq pg(p)\binom{\lfloor\log_2 p \rfloor+d-k-1}{d-k-1} + \sum_{i=1}^{m}p_ig(p_i)\binom{\lfloor\log_2 p_i \rfloor+d-k}{d-k} \\
        & \leq pg(p)\left(\binom{\lfloor\log_2 p \rfloor+d-k-1}{d-k-1} + \binom{\lfloor\log_2 p \rfloor+d-k-1}{d-k}\right) = pg(p)\binom{\lfloor\log_2 p \rfloor+d-k}{d-k},
    \end{align*}
    as desired.
\end{proof}

Theorem~\ref{thm:upper-bound-for-subtrees} can be proved directly from the above lemmas and observations.
\begin{proof}[Proof of Theorem~\ref{thm:upper-bound-for-subtrees}]
We first show that for every finite family $\cF$ of subtree-products in $d$ trees $T_1,\ldots,T_d$, we have $\tau(\cF)\leq\nu(\cF)^d$. When $d=1$, it is already known that $\tau = \nu$ for a family of subtrees in a tree. For $d>1$, let $\cF=\{S_1^{(j)}\square\cdots\square S_d^{(j)}: 1\leq j\leq n, S_i^{(j)}\text{ is a subtree in }T_i\;\;\forall 1\leq i\leq d\}$ and $\cF_i = \{S_i^{(j)}: 1 \leq j \leq n\}$ for each $i$. Obviously, we have $\nu(\cF_i) \leq \nu(\cF)$. Take a smallest piercing set $P_i$. Then we have $|P_i| = \tau(\cF_i) = \nu(\cF_i) \leq \nu(\cF)$. Then the Cartesian product $P_1\times\cdots\times P_d$ is a piercing set for $\cF$. This shows $\tau(\cF)\leq\nu(\cF)^d$.

Now for integers $p, d \geq1$, define
\[f(p,d) = \max\{\tau(\cF): \cF\text{ is a finite family of subtree-products in $d$ trees}, \nu(\cF)\leq p\},\]
where the maximum ranges over all choices of the trees. By the above argument, the possible piercing numbers of such families form a nonempty bounded set of integers, so $f(p,d)$ exists. Also, $f(p,1)=p$.
 
From the recurrence relation \eqref{eq:tree-recurrence}, one can see that the function $f(p,d)$ satisfies the assumptions in Lemma~\ref{lem:dimension-lifting} with $k=1$ and $g(p)=1$. The upper bound in Theorem~\ref{thm:upper-bound-for-subtrees} immediately follows.
\end{proof}

\subsection{Lemmas for rectangles}
\noindent
For a nonempty finite set \(X\subseteq\mathbb{R}^2\), we denote by \(\langle X\rangle \) the smallest axis-parallel rectangle containing~\(X\). We define $\langle \varnothing \rangle = \varnothing$. We shall use the following lemmas to obtain the upper bound for rectangles.

\begin{lemma}\label{lem:choice-of-four}
Suppose \(\mu\) is a probability measure on \(\mathbb{R}^2\) supported on a finite set \(P\subseteq\mathbb{R}^2\). Then, for any \(\theta\in(0,1)\), we have
\[
\sum_{\overset{p_1,\cdots,p_4\in P}{\mu(\langle p_1,p_2,p_3,p_4\rangle)\geq\theta}}\mu(\{p_1\})\mu(\{p_2\})\mu(\{p_3\})\mu(\{p_4\})\geq\left(\frac{1-\theta}{4}\right)^4.
\]
\end{lemma}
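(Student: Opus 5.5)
Interpret the sum probabilistically. Let $X_1,X_2,X_3,X_4$ be independent random points, each distributed according to $\mu$. The left-hand side is exactly $\Pr\bigl[\mu(\langle X_1,X_2,X_3,X_4\rangle)\geq\theta\bigr]$, so it suffices to exhibit an event of probability at least $((1-\theta)/4)^4$ on which the bounding rectangle of the four points has measure at least $\theta$. We aim for the event that $X_1$ is ``far left'', $X_2$ ``far right'', $X_3$ ``far down'' and $X_4$ ``far up'', each with probability at least $(1-\theta)/4$. Independence then gives the product bound.

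Set $\delta=(1-\theta)/4$. Define the thresholds by quantiles of the marginals. Let $a$ be the smallest $x$-value such that $\mu(\{p: p_x\leq a\})\geq\delta$, and let $b$ be the largest $x$-value such that $\mu(\{p:p_x\geq b\})\geq\delta$. Define $c$ and $e$ similarly for the $y$-coordinate. These exist because $P$ is finite and $\delta<1$. By minimality of $a$, we have $\mu(\{p:p_x<a\})<\delta$, and symmetric strict inequalities hold for $b$, $c$ and $e$.

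Consider the event $E=\{X_{1,x}\leq a,\ X_{2,x}\geq b,\ X_{3,y}\leq c,\ X_{4,y}\geq e\}$, which has probability at least $\delta^4$. On $E$, the bounding rectangle $\langle X_1,\dots,X_4\rangle$ has $x$-range containing $[\min X_{\cdot,x},\max X_{\cdot,x}]\supseteq[a,b]$ whenever $a\leq b$. In general, every point $p$ with $a\leq p_x\leq b$ and $c\leq p_y\leq e$ lies in the rectangle, because $\min_i X_{i,x}\leq X_{1,x}\leq a$, and similarly for the other three sides. So the measure of the rectangle is at least $\mu([a,b]\times[c,e])$. The complement of $[a,b]\times[c,e]$ is contained in the union of the four sets $\{p_x<a\}$, $\{p_x>b\}$, $\{p_y<c\}$ and $\{p_y>e\}$, each of measure less than $\delta$. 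Hence $\mu([a,b]\times[c,e])>1-4\delta=\theta$.

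This union bound also takes care of the degenerate case $a>b$. Then every point satisfies $p_x<a$ or $p_x>b$, so the total measure would be less than $2\delta<1$, which is impossible; hence $a\leq b$ automatically, and likewise $c\leq e$. The main point needing care is choosing the quantile thresholds with the right strict/non-strict inequalities when atoms are present. Taking the extremal values as above handles this, and the rest is routine.
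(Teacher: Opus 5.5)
Your proposal is correct and is essentially the paper's proof: quantile thresholds $a,b,c,e$ with tail masses below $(1-\theta)/4$ on the open side and at least $(1-\theta)/4$ on the closed side, the product of the four closed tail masses as a lower bound for the sum, and a union bound showing that the central rectangle $[a,b]\times[c,e]$, which lies inside every such bounding rectangle, has measure at least $\theta$. The only cosmetic difference is that you obtain $a\leq b$ and $c\leq e$ from the union bound itself, whereas the paper gets them from $(1-\theta)/4<1/2$.
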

\begin{proof}
For a real number $r$ and $\star\in\{<,\leq,\geq,>\}$, we define $\mu(x \star r) = \mu(\{(x,y)\in\mathbb{R}^2:x \star r\})$ and $\mu(y \star r) = \mu(\{(x,y)\in\mathbb{R}^2:y \star r\})$.
Since $P$ is finite, we can choose $a,b,c,d\in\mathbb{R}$ such that 
\begin{align*}
\mu(x<a)\leq\frac{1-\theta}{4}\leq\mu(x\leq a), \quad \mu(x>b)\leq\frac{1-\theta}{4}\leq\mu(x\geq b)\\
\mu(y<c)\leq\frac{1-\theta}{4}\leq\mu(y\leq c), \quad
\mu(y>d)\leq\frac{1-\theta}{4}\leq\mu(y\geq d).
\end{align*}
For example, we can take \(a=\inf\{r:\mu(x\leq r)\geq\frac{1-\theta}{4}\}\).
Since \(\frac{1-\theta}{4}<\frac{1}{2}\), it is clear that \(a\leq b\) and \(c\leq d\). Moreover, we have
\begin{align*}
\mu([a,b]\times[c,d]) \geq 1-\mu(x<a)-\mu(x>b)-\mu(y<c)-\mu(y>d)\geq 1-4\times\frac{1-\theta}{4} = \theta.
\end{align*}
For any choice of points \(p_1\in\{(x,y)\in P:x\leq a\}\), \(p_2\in\{(x,y)\in P:x\geq b\}\), \(p_3\in\{(x,y)\in P:y\leq c\}\) and \(p_4\in\{(x,y)\in P:y\geq d\}\), we have
$\mu(\langle p_1,p_2, p_3,p_4\rangle)\geq\mu([a,b]\times[c,d])\geq\theta$, so it follows that 
\begin{align*}
\sum_{\overset{p_1,\cdots,p_4\in P}{\mu(\langle p_1,p_2,p_3,p_4\rangle)\geq\theta}}\mu(\{p_1\})\mu(\{p_2\})\mu(\{p_3\})\mu(\{p_4\})\geq \mu(x\leq a)\mu(x\geq b)\mu(y\leq c)\mu(y\geq d)\geq\left(\frac{1-\theta}{4}\right)^4,
\end{align*}
as desired.
\end{proof}

\begin{lemma}\label{lem:num-well-organized}
Let $m$ and $n$ be positive integers, and let $X$ be a multiset of points $p_1,\cdots,p_{mn}\in\mathbb{R}^2$ (not necessarily pairwise distinct). Then there are at most \(3^{mn}\) partitions of $X$ into exactly \(n\) parts \(P_1,\dots,P_n\) with $|P_i|=m$ where rectangles \(\langle P_1\rangle,\dots\langle P_n\rangle\) are pairwise disjoint.
\end{lemma}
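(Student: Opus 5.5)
We will prove the bound by induction on $n$, after encoding each admissible partition by a word in a three-letter alphabet of total length at most $mn$. The basic fact we use is a separating-line property. If $\langle P_1\rangle,\dots,\langle P_n\rangle$ are pairwise disjoint closed rectangles, then any two of them are separated by a vertical or a horizontal line. It is not true in general that a single line separates the whole family into two nonempty groups, since pinwheel configurations exist. We therefore do not cut by lines. Instead, we peel off one part at a time in a canonical way.

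The recursion is as follows. Sort the multiset $X$ lexicographically, with ties among equal points broken by index. Let $q$ be the first point in this order, and consider the part $P$ containing $q$. The points of $X$ fall into the following classes relative to $P$:
\begin{itemize}
\item the $m$ points in $P$;
\item the points outside $P$ but inside $\langle P\rangle$;
\item the points outside $\langle P\rangle$.
\end{itemize}
No point of another part can lie in $\langle P\rangle$, because the rectangles are disjoint. So $P$ is exactly $X\cap\langle P\rangle$, and the middle class is empty.

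The plan is then to bound the number of possible rectangles $\langle P\rangle$ with $|X\cap\langle P\rangle|=m$ and with $q$ among them, in a way that telescopes. Record, for every point $x\in X$, one of three labels: "in $P$", "left/below region", or "other". Removing $P$ leaves a valid partition of the $m(n-1)$ remaining points. Induction then gives at most $3^{m(n-1)}$ completions, so it suffices to show that the number of choices of $P$ is at most $3^m$. That count is false as stated, since $P$ can be chosen in about $(mn)^4$ ways. Hence the induction must charge the choice of $P$ to labels on all points, not just on the points of $P$.

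We refine the scheme accordingly, and assign each point $x$ a single label in $\{0,1,2\}$ globally. The key structural claim concerns the sequence of parts taken in a suitable canonical order. For each point $x$, the first rectangle in that order lying on the relevant side of $x$ is either to the right of $x$ or above it, with the value tied to which boundary of $x$'s own rectangle is exposed. Label each point by whether it is the minimal point of its part, or, if not, whether it is separated from the previous point of its part horizontally or vertically. Given $X$ sorted, these labels are meant to reconstruct the partition greedily: scan points in lexicographic order, and label $0$ means "start a new part", while labels $1$ and $2$ specify which currently open part the point joins. Disjointness should force that, among open parts, at most one is reachable by a horizontal step and at most one by a vertical step. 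Then the map from partitions to $\{0,1,2\}^{mn}$ is injective, which gives $3^{mn}$.

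The main obstacle is justifying the uniqueness claim. For a point $x$ scanned in lexicographic order, we must show that among the open parts $x$ could join while keeping all rectangles disjoint, there is at most one "extending upward/downward in the current column sense" and at most one "extending rightward". We would first try to prove that the open parts with bounding boxes intersecting the vertical line through $x$ are linearly ordered by $y$-coordinate. The admissible one should then be the nearest part below or above, and only one of these survives disjointness with the final rectangles. If this fails, the fallback is to choose the labels from the final rectangles' relative position, which gives the same three options: starts a part, joins the nearest part in $x$-order, or joins the nearest part in $y$-order. We would then verify reconstructibility by induction on the number of scanned points.
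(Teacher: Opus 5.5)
Your overall architecture matches the paper's: an injective encoding of admissible partitions by words over a three-letter alphabet. One letter marks the leftmost point of each part, and the other two tell a left-to-right decoder which already-started part the current point joins. The proposal, however, stops exactly where the content of the lemma lies. Labels $1$ and $2$ are never defined precisely; two points with distinct coordinates are separated both by a horizontal and by a vertical line. The uniqueness claim is left as ``the main obstacle''. And the dichotomy you propose (horizontal versus vertical step, or nearest part in $x$-order versus nearest part in $y$-order) is the wrong one. Take $m=2$, $n=3$ and $a=(0,0)$, $b=(\tfrac12,10)$, $p=(1,5)$, $e=(2,10)$, $f=(\tfrac52,0)$, $g=(3,5)$. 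Both $\{a,p\},\{b,e\},\{f,g\}$ and $\{b,p\},\{a,f\},\{e,g\}$ are admissible, and when $p$ is scanned the open parts are $\{a\}$ and $\{b\}$ in both. So $p$ can join the nearest open part below it or the nearest one above it. In both cases $p$ differs from its predecessor in both coordinates with $|\Delta y|>|\Delta x|$, and the two joins differ only in whether $p$ goes up or down. A horizontal/vertical label therefore does not determine the decoder's choice, and it is false that only one candidate ``survives disjointness with the final rectangles'': both do.

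The missing idea is the following. When $p$ is scanned, every open part (started, with fewer than $m$ points so far) either receives $p$ or later receives a point with $x$-coordinate at least that of $p$. Hence all their final rectangles meet the vertical line through $p$. Being pairwise disjoint, they cut this line in pairwise disjoint intervals, so the $y$-ranges of the already-scanned points of the open parts are pairwise disjoint and linearly ordered. Consequently $p$ must join the open part whose $y$-range contains the $y$-coordinate of $p$, if there is one. Otherwise it joins either the nearest open part below or the nearest one above, and a single bit decides which: whether $p$ lies above all earlier points of its own part. That bit is precisely the paper's $+1/-1$ label, with $0$ for the leftmost point of a part. The paper phrases the argument as a contradiction rather than a decoder. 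For two partitions with equal labels, it takes the leftmost $p\in P_i\cap Q_j$ with $i\neq j$. In the $+1$ case it picks the highest lower-left point $q$ of $p$ in $P_i\cup Q_j$ (say $q\in P_i\cap Q_i$), a lower-left point $p'\in Q_j$, and some $q'\in Q_i\setminus P_i$, which must lie to the right of $p$. Then $\langle\{p,p'\}\rangle$ and $\langle\{q,q'\}\rangle$ intersect. The later point $q'$ is exactly the use of final rectangles that your sketch gestures at but never pins down.
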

\begin{proof}
We actually prove that there can be at most $\binom{mn-m}{n-1}2^{(m-1)n}<3^{mn}$ many such partitions. 
We may assume that \(x\)-coordinates and \(y\)-coordinates are distinct for those \(mn\) points, by applying small perturbation if needed. Now, if such a partition is given, we assign each point a value in $\{-1,0,+1\}$ as follows; for each $i$, $p\in P_i$ is assigned with $0$ if it is the leftmost point of $P_i$, i.e., its $x$-coordinate is smallest in $P_i$, \(+1\) if it is not the leftmost point of $P_i$ and its $y$-coordinate is greater than all points of $P_i$ with smaller $x$-coordinate, and \(-1\) otherwise. We claim that distinct partitions have distinct assignments.

Suppose not. Then there are two distinct partitions \(X=P_1\cup\dots\cup P_n\) and \(X=Q_1\cup\cdots\cup Q_n\) of $X$ having the same assignment. Since every part has exactly one point assigned with \(0\), we may assume, by permuting the indices if needed, that \(P_i\) and \(Q_i\) have the same leftmost point. We will show that $\bigcup_{i\neq j}{(P_i \cap Q_j)}=\varnothing$. If not, we can take the leftmost point $p\in\bigcup_{i\neq j}{(P_i \cap Q_j)}$. 

Suppose that $p$ lies in $P_i \cap Q_j$ for some $i\neq j$ and that \(p\) is assigned with \(+1\). This assignment particularly implies that both $P_i$ and $Q_j$ contain at least one point whose $x$-coordinates and $y$-coordinates are smaller than those of $p$. Let $q$ be such a point with the largest $y$-coordinate. Without loss of generality, we may assume $q \in P_i$. Note that, by the minimality assumption of $p$, it must be $q \in P_i \cap Q_i$. On the other hand, $Q_j$ also contains a point $p'$ whose $x$-coordinate is smaller than $p$, and by the maximality assumption of $q$, the $y$-coordinate of $p'$ is smaller than $q$. Now, since $|P_i|=|Q_i|$ while $p\in P_i\setminus Q_i$, there exists a point $q' \in Q_i \setminus P_i$. By the minimality assumption of $p$, it must be that the $x$-coordinate of $q'$ is greater than $p$. Now, since $\{p,p'\}\subset Q_j$ and $\{q,q'\}\subset Q_i$, it must be $\langle \{p,p'\} \rangle \cap \langle \{q,q'\} \rangle = \varnothing$ from the assumption that  $\langle Q_j\rangle \cap \langle Q_i\rangle = \varnothing$. However, this is not true because the $x$-coordinate of $p$ lies between the $x$-coordinate of $q$ and that of $q'$, and the $y$-coordinate of $q$ lies between the $y$-coordinate of $p$ and that of $p'$. This is a contradiction.

If \(p\) is assigned with \(-1\), then both $P_i$ and $Q_j$ contain at least one point whose $x$-coordinate is smaller than that of $p$ while $y$-coordinate is greater than that of $p$. Then a similar argument applies by taking such a point with smallest $y$-coordinate. The same contradiction would be derived, and this shows that $\bigcup_{i\neq j}{(P_i \cap Q_j)}$ contains no~point.

Now, each assignment determines the partition uniquely. We note that the number of choice of points with $0$ is at most $\binom{mn-m}{n-1}$, because the leftmost point of $X$ should be assigned with $0$, and the rightmost $m-1$ points of $X$ should not be assigned with $0$. For the remaining $mn-n=(m-1)n$ points, there are at most $2^{(m-1)n}$ choices to assign $+ 1$ and $-1$. Thus the total number of partitions satisfying the condition is bounded by $\binom{mn-m}{n-1}2^{(m-1)n}$.
\end{proof}

\section{Piercing boxes}
\noindent In this section we prove Theorem~\ref{thm:upper-bound-for-boxes}. By the same argument as in the proof of Theorem~\ref{thm:upper-bound-for-subtrees}, it is sufficient to prove it when $d=2$.
\begin{theorem}\label{thm:upper-bound-for-boxes-dim2}
    Every nonempty finite family $\mathcal R$ of axis-parallel rectangles in the plane satisfies
    \[
        \tau(\mathcal R) \leq 2A\nu(\mathcal R) \log\log (e^e+\nu(\mathcal R) ) \left[1+8\log \left( 12e\cdot24^{1/4}A \log\log(e^e+\nu(\mathcal R)) \right) \right],
    \]
    where $A$ is a constant from Theorem~\ref{thm:boxes-fractional-packing}.
\end{theorem}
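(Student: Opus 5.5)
The plan is a two-stage random sampling argument. Stage one pierces all but a "thin" remainder of $\mathcal R$ using $O(\tau^*\log \tau^*)$-type many points. Stage two pierces that remainder using the rectangle structure encoded in Lemmas~\ref{lem:choice-of-four} and~\ref{lem:num-well-organized}. The step I am least sure how to close is the final bookkeeping in stage two.

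\textbf{Setup.} Write $\nu=\nu(\mathcal R)$ and $L=A\log\log(e^e+\nu)$. By Theorem~\ref{thm:boxes-fractional-packing} we have $t:=\tau^*(\mathcal R)\le \nu L$. Take the finitely supported probability measure $\mu$ from the preliminaries, so that $\mu(R)\ge 1/t$ for every $R\in\mathcal R$. The target inequality has the shape $\tau\le 2\nu L\,[1+8\log(cL)]$. I read this as: first spend about $2\nu L$ points to deal with everything at "measure scale" $1/t$, then lose a factor $8\log(cL)$ from an $\varepsilon$-net-type union bound, where the relevant ratio is $t/\nu\le L$ rather than $t$ itself.

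\textbf{Stage one: a random sample and the unpierced subfamily.} Draw a random multiset $S$ of $s$ i.i.d. $\mu$-points, with $s\approx 16\,t\log(cL)$. Let $\mathcal R_S\subseteq\mathcal R$ be the rectangles missed by $S$. I will bound the probability that $\mathcal R_S$ contains a packing of size $k$, for $k\approx \nu$ suitably scaled. Each member of such a packing has measure $\ge 1/t$, so it is not pierced by $S$ with probability at most $(1-1/t)^s$. The difficulty is that the union bound cannot range over $\mathcal R$ directly.

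\textbf{Stage two: discretizing packings by four-point witnesses.} To make the union bound finite and controlled, draw a second independent sample $Y$ of $4n$ points. Apply Lemma~\ref{lem:choice-of-four} to $\mu$ restricted and normalized to each rectangle $R$ of the packing; this is legitimate since the restriction is again finitely supported. It shows that with probability at least $((1-\theta)/4)^4$, four sample points landing in $R$ span a rectangle $\langle p_1,\dots,p_4\rangle\subseteq R$ that carries at least a $\theta$-fraction of $\mu(R)$. A packing of $k$ bad rectangles then yields, with non-negligible probability, a partition of part of $Y$ into $4$-sets whose hulls are pairwise disjoint and each of $\mu$-measure $\ge\theta/t$ and avoiding $S$. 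By Lemma~\ref{lem:num-well-organized} with $m=4$, the number of such partitions is at most $3^{4n}$, and each is missed by $S$ with probability at most $(1-\theta/t)^{s}$. Choosing $\theta=1/2$ and balancing $3^{4n}\cdot 24^{\dots}\cdot e^{-\theta s/t}$ against the witness probability is where the constant $12e\cdot 24^{1/4}$ and the factor $8=2\cdot 4$ should come from. The conclusion is that with positive probability $\nu(\mathcal R_S)\le \nu/L$, or some comparable shrinkage.

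\textbf{Finish, and the main obstacle.} With such an $S$, we have $\tau(\mathcal R)\le |S|+\tau(\mathcal R_S)$. Bounding $\tau(\mathcal R_S)\le \tau^*$-type estimates via Theorem~\ref{thm:boxes-fractional-packing} again (or by iterating, with the geometric decay summing to the leading factor $2$) gives the stated bound. I expect the main obstacle to be exactly this last step. The union bound must be organized so that the "log" factor depends only on $t/\nu\approx L$ and not on $t$, which forces the witness count to be exponential in the packing size rather than polynomial in $|\mathcal R|$. That is why Lemma~\ref{lem:num-well-organized} is phrased with the bound $3^{mn}$. Correctly coupling the witness sample size $n$ with $k$ and $s$ is the delicate calculation.
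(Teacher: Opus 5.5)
There is a genuine gap: the quantitative step you flagged does not close as written, and the finish is not set up so that anything decays.

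\textbf{The miss probability.} Your architecture matches the paper's: four-point witnesses from Lemma~\ref{lem:choice-of-four}, the $3^{4n}$ count from Lemma~\ref{lem:num-well-organized}, exponential decay in the number of chosen points, then an iteration. The paper proves Lemma~\ref{lem:boxes-packing-reduction} by a derandomized greedy version of your double sampling. The problem is that you bound the probability that $S$ misses a witness partition by $(1-\theta/t)^s\approx e^{-\theta s/t}$.

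Let $E$ be the event that $\mathcal R_S$ has a packing of size $n$. Let $F$ be the event that all of $Y$ ($|Y|=4n$) splits into four-point parts with pairwise disjoint hulls, each of measure at least $\theta/t$ and avoiding $S$. Then $\Pr[F]\ge\Pr[E]\,\frac{(4n)!}{24^n}\bigl(\frac{1-\theta}{4t}\bigr)^{4n}$. With your miss bound, $\Pr[F]\le 3^{4n}e^{-\theta s/t}$, so you only get $\Pr[E]\le\bigl(\frac{3e\,24^{1/4}t}{(1-\theta)n}\bigr)^{4n}e^{-\theta s/t}$. For $n$ of order $t/L$ this is below $1$ only when $s\gtrsim 4nt\log(cL)/\theta$, i.e.\ quadratically many points in $t$.

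The missing observation is that the $n$ hulls are pairwise disjoint. So $S$ must avoid a set of measure at least $n\theta/t$, and the miss probability is at most $(1-n\theta/t)^s\le e^{-ns\theta/t}$. This is the inequality $\mu(\mathcal R(p_\bullet))\ge n\theta/r$ in the paper's greedy step. With it, $\Pr[E]\le\bigl(\frac{3e\,24^{1/4}t}{(1-\theta)n}e^{-s\theta/(4t)}\bigr)^{4n}$. This is below $1$ once $n>\frac{3e\,24^{1/4}}{1-\theta}\,t\,e^{-s\theta/(4t)}$, which recovers Lemma~\ref{lem:boxes-packing-reduction}. Also, the partition must be of all of $Y$, with exactly four points in each of $n$ packing rectangles; ``part of $Y$'' costs an extra binomial factor.

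\textbf{The finish.} Theorem~\ref{thm:boxes-fractional-packing} bounds $\tau^*$, not $\tau$, so $\tau(\mathcal R_S)$ cannot be handled in one shot. The iteration is essential, and the shrinkage must be measured against $\tau^*$, not $\nu$. Your target $\nu(\mathcal R_S)\le\nu/L$ only gives $\tau^*(\mathcal R_S)\le\nu\le\tau^*(\mathcal R)$, so nothing decays.

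Instead, at round $i$ take a fresh measure $\mu_i$ from an optimal fractional piercing of the current family $\mathcal F_i$. Put $t_i=\tau^*(\mathcal F_i)$, $\theta=1/2$ and $L=A\log\log(e^e+\nu(\mathcal R))$, and use $\lceil c\,t_i\rceil$ points with $c=8\log(12e\,24^{1/4}L)$. This yields $\nu(\mathcal F_{i+1})\le t_i/(2L)$. Then Theorem~\ref{thm:boxes-fractional-packing} applied to $\mathcal F_{i+1}$, with $\log\log(e^e+\cdot)$ nondecreasing, gives $t_{i+1}\le t_i/2$. Summing and using $t_i\ge1$,
$\tau(\mathcal R)\le\sum_i(c\,t_i+1)\le(1+c)\sum_i t_i\le 2(1+c)\tau^*(\mathcal R)\le 2(1+c)\,\nu(\mathcal R)L$, which is exactly the stated bound.
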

The main idea is that we can find a small set of points such that the unpierced rectangles have a small packing number. This yields a decreasing sequence of families whose fractional piercing numbers are decreasing geometrically, which gives a piercing set of controlled size. The resulting estimate also yields weighted weak \(\varepsilon\)-nets.

\subsection{Reducing the packing number of the unpierced subfamily}
We first show the existence of a small set of points such that the unpierced rectangles have a small packing number.
\begin{lemma}\label{lem:boxes-packing-reduction}
Let $r\geq1$ and let \(\cF\) be a nonempty finite family of rectangles and \(\mu\) be a probability measure supported on a finite set \(P\subseteq\mathbb{R}^2\) such that \(\mu(R)\geq 1/r\) for every \(R\in\cF\). For an integer \(k\geq 0\) and \(\theta\in(0,1)\), there exists a set \(S\subseteq P\) such that 
\[|S|\leq k\;\;\;\text{and}\;\;\;\nu(\{R\in\cF:R\cap S=\varnothing \})\leq\left\lfloor\frac{3e24^{1/4}}{1-\theta}re^{\frac{-k\theta}{4r}}\right\rfloor.\]
\end{lemma}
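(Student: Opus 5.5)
We argue probabilistically. Choose $S$ as $k$ points sampled independently from $\mu$; then $|S|\leq k$. The plan is to bound the expected number of ``witness'' configurations for a large packing of unpierced rectangles, and to use Lemma~\ref{lem:choice-of-four} and Lemma~\ref{lem:num-well-organized} to relate packings to such configurations. Set $\nu_0=\nu(\{R\in\cF:R\cap S=\varnothing\})$ and $n=\lfloor \frac{3e24^{1/4}}{1-\theta}re^{-k\theta/(4r)}\rfloor+1$. It suffices to show that, with positive probability, there is no packing of size $n$ among the unpierced rectangles.

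\emph{Step 1: from a packing to disjoint hulls.} Suppose $R_1,\dots,R_n\in\cF$ are pairwise disjoint and all miss $S$. Restrict $\mu$ to $R_i$ and normalize; this gives a probability measure $\mu_i$ supported in $P\cap R_i$. Apply Lemma~\ref{lem:choice-of-four} to $\mu_i$ with parameter $\theta$. The $\mu_i$-weight of $4$-tuples $(p_1,\dots,p_4)$ in $R_i$ with $\mu_i(\langle p_1,\dots,p_4\rangle)\geq\theta$ is at least $((1-\theta)/4)^4$. In terms of $\mu$, the $\mu^{\otimes4}$-weight of such tuples is at least $((1-\theta)/4)^4\mu(R_i)^4\geq((1-\theta)/(4r))^4$. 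Each such tuple has hull $Q=\langle p_1,\dots,p_4\rangle\subseteq R_i$ with $\mu(Q)\geq\theta\mu(R_i)\geq\theta/r$, and $Q$ misses $S$. Choosing one tuple per $R_i$ gives $4n$ points partitioned into $n$ parts of size $4$ with pairwise disjoint hulls, because the hulls lie in disjoint $R_i$.

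\emph{Step 2: first moment.} Let $Z$ be the random variable
\[Z=\sum \prod_{j}\mu(\{q_j\}),\]
where the sum is over ordered $4n$-tuples $(q_1,\dots,q_{4n})\in P^{4n}$ whose consecutive blocks of four have pairwise disjoint hulls, each of $\mu$-measure at least $\theta/r$, and none meeting $S$. By Step 1, on the bad event we have $Z\geq((1-\theta)/(4r))^{4n}$. On the other hand, for a fixed tuple, the probability that $S$ misses the union of the $n$ hulls is at most $(1-n\theta/r)^k\leq e^{-kn\theta/r}$, since the hulls are disjoint with total measure at least $n\theta/r$. Also, $\sum_{P^{4n}}\prod_j\mu(\{q_j\})=1$. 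Hence $\mathbb{E}Z\leq e^{-kn\theta/r}$.

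\emph{Step 3: counting overcounts.} This is the main obstacle, and it is where Lemma~\ref{lem:num-well-organized} enters. A single packing yields many tuples, but the bound on $\mathbb{E}Z$ above is crude, so we should lower-bound $Z$ more carefully. Fix the multiset $X$ of $4n$ points. By Lemma~\ref{lem:num-well-organized}, $X$ admits at most $3^{4n}$ partitions into disjoint-hull quadruples, and each partition corresponds to $(4!)^n n!$ orderings. So it is more efficient to sum over unordered structures. I would instead define $Z$ as a sum over the $n$ rectangles (choose for each $R_i$ its tuple set), giving a lower bound of order $((1-\theta)/(4r))^{4n}$ times the number of choices. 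The upper bound then counts multisets times partitions, that is, $\binom{4n}{4,\dots,4}^{-1}$-type factors times $3^{4n}$. I would try to balance this to get
\[\Pr[\text{bad}]\leq 3^{4n}(4!)^n\, n!\,\bigl(\tfrac{4r}{1-\theta}\bigr)^{4n} e^{-kn\theta/r}\cdot(\text{multinomial factor}),\]
and then use $n!\geq(n/e)^n$ and a $(4n)!$-type term. After taking $4n$-th roots, the bound is below $1$ exactly when $n>\frac{3e24^{1/4}}{1-\theta}re^{-k\theta/(4r)}$; the constant $3e\,24^{1/4}$ comes from $3$, $(n/e)$ and $(4!)^{1/4}$. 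The delicate part is arranging the sum so that the combinatorial factors appear with the correct powers. Once the probability is less than $1$, some $S$ works. The case $k=0$ follows trivially from $\nu\leq\tau^*\leq r$.
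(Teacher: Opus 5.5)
Your overall strategy is a sound randomized version of the paper's argument: take $S$ to be $k$ independent samples from $\mu$ and apply Markov's inequality to a $\mu^{\otimes 4n}$-weighted count $Z$ of point configurations. The paper instead picks the $k$ points greedily, each one hitting at least an $n\theta/r$ fraction of the remaining weight, which derandomizes your Step~2.

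\textbf{The gap.} The proposal stops exactly where the bound is produced. Step~3 is only a plan, and the factors you sketch do not come out right.
\begin{itemize}
\item With the trivial total mass $\sum_{P^{4n}}\prod_j\mu(\{q_j\})=1$ and the lower bound $Z\geq((1-\theta)/(4r))^{4n}$, Markov gives $(4r/(1-\theta))^{4n}e^{-kn\theta/r}$. Its $4n$-th root does not depend on $n$, so no choice of $n$ helps.
\item Your tentative estimate keeps $n!\,(4!)^n3^{4n}$ in the numerator. If the unspecified ``multinomial factor'' is $1/(4n)!$, an uncancelled $n!$ survives. You then only get a condition like $n\gtrsim\bigl(re^{-k\theta/(4r)}/(1-\theta)\bigr)^{4/3}$, which is weaker than the lemma.
\item Defining $Z$ as a sum over the rectangles $R_i$ cannot work. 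The witness count must be a fixed function of $S$ summed over point tuples in $P^{4n}$, because the number of packings in $\cF$ is not controlled.
\end{itemize}

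\textbf{The missing idea: symmetrization over $S_{4n}$.} Let $\Phi\subseteq P^{4n}$ be the set of tuples whose consecutive quadruples have pairwise disjoint hulls, each of $\mu$-measure at least $\theta/r$. Let $\phi(q)=\prod_j\mu(\{q_j\})$; it is invariant under permuting coordinates. For each $q\in P^{4n}$, at most $n!\,(4!)^n3^{4n}$ permutations $\sigma$ satisfy $\sigma\cdot q\in\Phi$. This is because Lemma~\ref{lem:num-well-organized} gives at most $3^{4n}$ admissible partitions, and each is realized by at most $n!(4!)^n$ orderings. Hence
\[
(4n)!\sum_{q\in\Phi}\phi(q)=\sum_{q\in P^{4n}}\phi(q)\,\bigl|\{\sigma\in S_{4n}:\sigma\cdot q\in\Phi\}\bigr|\leq n!\,24^n3^{4n},
\]
so $\mathbb{E}Z\leq\frac{n!\,24^n3^{4n}}{(4n)!}e^{-kn\theta/r}$.

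On the bad event, the $n!$ orderings of $R_1,\dots,R_n$ give distinct tuples. So in fact $Z\geq n!\,((1-\theta)/(4r))^{4n}$, and this $n!$ cancels the one above. Markov's inequality and $(4n)!\geq(4n/e)^{4n}$ then give
\[
\Pr[\text{bad}]\leq\frac{24^n3^{4n}}{(4n)!}\Bigl(\frac{4r}{1-\theta}\Bigr)^{4n}e^{-kn\theta/r}\leq\Bigl(\frac{3e\,24^{1/4}\,r\,e^{-k\theta/(4r)}}{(1-\theta)\,n}\Bigr)^{4n}<1
\]
for $n=\lfloor\frac{3e24^{1/4}}{1-\theta}re^{-k\theta/(4r)}\rfloor+1$. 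This is where the constant $3e\,24^{1/4}$ comes from: the $e$ comes from $(4n)!$, and the $4$ in $4n$ cancels the $4$ in $4r$.

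With this inserted, your proof is complete and equivalent to the paper's. The paper's greedy version simply produces $S$ deterministically.
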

\begin{proof}
If \(k\geq|P|\), the statement is obvious by taking \(S=P\). Assume \(k<|P|\).

Fix an integer \(n\geq 1\) to be chosen later for given \(k,\theta\) and \(r\). 
For a point set \(S\subseteq P\), let \(\Phi_S\) be the set of all sequences $p_\bullet$ of $4n$ points \(p_1,\cdots,p_{4n}\) in $P$ such that 
\begin{itemize}
    \item rectangles \(\langle\{p_1,p_2,p_3,p_4\}\rangle\), \dots, \(\langle\{p_{4n-3},p_{4n-2},p_{4n-1},p_{4n}\}\rangle\) are pairwise disjoint,
    \item $\mu(\langle p_{4i-3}, p_{4i-2}, p_{4i-1}, p_{4i}\rangle)\geq\theta/r$ for all \(1\leq i\leq n\), and
    \item \(\left(\bigcup_{i=1}^n\langle p_{4i-3}, p_{4i-2}, p_{4i-1}, p_{4i}\rangle\right)\cap S=\varnothing\).
\end{itemize}
For such a sequence $p_\bullet$, define \(\mathcal{R}(p_\bullet)=\bigcup_{i=1}^n\langle p_{4i-3}, p_{4i-2}, p_{4i-1}, p_{4i}\rangle\) and \(\phi(p_1,\cdots,p_{4n})=\prod_{i=1}^{4n}\mu(\{p_i\})\). 
We will show that there exists a chain \(\varnothing=S_0\subseteq S_1\subseteq\cdots \subseteq S_{k}\subseteq P\) such that 
\[|S_j|=j\;\;\;\forall{1\leq j\leq k}\;\;\;\text{and}\;\;\;\sum_{p_{\bullet}\in\Phi_{S_j}}\phi(p_{\bullet})\leq \frac{n!24^n3^{4n}}{(4n)!}e^{-jn\theta/r}.\]

Given \(S\subseteq P\), we have
\begin{align*}
\sum_{p\in P\setminus S}\mu(\{p\})\sum_{p_{\bullet}\in\Phi_S,p\in\mathcal{R}(p_\bullet)}\phi(p_{\bullet})
& = \sum_{p_\bullet\in\Phi_S}\sum_{p\in\mathcal{R}(p_\bullet)\cap P}\mu(\{p\})\phi(p_{\bullet})\\
&=\sum_{p_\bullet\in\Phi_S}\mu(\mathcal{R}(p_\bullet))\phi(p_{\bullet})\\
&=\sum_{p_{\bullet}\in\Phi_S}\phi(p_\bullet)\sum_{i=1}^n\mu(\langle\{p_{4i-3}, p_{4i-2}, p_{4i-1}, p_{4i}\}\rangle)\geq\frac{n\theta}{r}\sum_{p_{\bullet}\in\Phi_S}\phi(p_{\bullet}).
\end{align*}
Since \(\mu\) is a probability measure, there exists \(p\in P\setminus S\) such that \[
\sum_{p_{\bullet}\in\Phi_S, p\in\mathcal{R}(p_\bullet)}\phi(p_{\bullet})\geq\frac{n\theta}{r}\sum_{p_{\bullet}\in\Phi_S}\phi(p_{\bullet}).\]
For such \(p\), we have
\begin{align*}
\sum_{p_{\bullet}\in\Phi_{S\cup\{p\}}}\phi(p_{\bullet})
= \sum_{p_{\bullet}\in\Phi_S}\phi(p_{\bullet}) - \sum_{p_{\bullet}\in\Phi_S, p\in\mathcal{R}(p_\bullet)}\phi(p_{\bullet}) \leq\left(1-\frac{n\theta}{r}\right) \sum_{p_{\bullet}\in\Phi_S}\phi(p_{\bullet})\leq e^{-n\theta/r}\sum_{p_{\bullet}\in\Phi_S}\phi(p_{\bullet}).
\end{align*}
Applying this argument repeatedly, we construct a chain \(\varnothing=S_0\subseteq S_1\subseteq\cdots\subseteq S_{k}\subseteq P\) such that
\[
\sum_{p_{\bullet}\in\Phi_{S_k}}\phi(p_{\bullet})\leq e^{-n\theta/r}\sum_{p_{\bullet}\in\Phi_{S_{k-1}}}\phi(p_{\bullet})\leq e^{-2n\theta/r}\sum_{p_{\bullet}\in\Phi_{S_{k-2}}}\phi(p_{\bullet})\leq\cdots\leq e^{-kn\theta /r}\sum_{p_{\bullet}\in\Phi_\varnothing}\phi(p_{\bullet}).
\]
By the construction, it is obvious that $|S_k|=k$.
If we choose \(4n\) points in \(\mathbb{R}^2\) with repetition allowed, by Lemma~\ref{lem:num-well-organized}, there exist at most \(3^{4n}\) partitions, and each partition gives at most \(n!(4!)^n\) sequences in total. Hence, if we consider rearrangement of a sequence \(p_\bullet\in P^{4n}\) by a permutation \(\sigma\in S_{4n}\) defined as \(\sigma\cdot p_{\bullet}=(p_{\sigma(1)},\cdots,p_{\sigma(4n)})\), we have \(|\{\sigma\in S_{4n}:\sigma\cdot p_{\bullet}\in\Phi_{\varnothing}\}|\leq n!24^n3^{4n}\). By the definition of the \(\phi\), its value is invariant under the rearrangement. Hence,
\begin{align*}
(4n)!\sum_{p_\bullet\in \Phi_{\varnothing}}\phi(p_\bullet)
=\sum_{\sigma\in S_{4n}}\sum_{\sigma\cdot p_\bullet\in\Phi_\varnothing}\phi(\sigma\cdot p_\bullet) &=\sum_{p_\bullet\in P^{4n}}\sum_{\sigma\in S_{4n}, \sigma\cdot p_{\bullet}\in\Phi_{\varnothing}}\phi(p_\bullet)\\
&\leq\sum_{p_\bullet\in P^{4n}}n!24^n3^{4n}\prod_{i=1}^{4n}\mu(\{p_i\}) \\
&=n!24^n3^{4n}\left(\sum_{p\in P}\mu(\{p\})\right)^{4n}=n!24^n3^{4n},
\end{align*}
since \(\mu\) is a probability measure. Thus, \(\sum_{p_{\bullet}\in\Phi_{S_k}}\phi(p_\bullet)\leq\frac{n! 24^{n}3^{4n}}{(4n)!}e^{-kn\theta/r}\), as required.

Now, for a rectangle \(R\in\cF\), by applying the Lemma~\ref{lem:choice-of-four} to the conditional probability measure \(\mu(X|R)=\frac{\mu(X\cap R)}{\mu(R)}\), we have
\[
\sum_{\overset{p_1,\cdots,p_4\in P\cap R}{\mu(\langle p_1,p_2,p_3,p_4\rangle)\geq\theta\mu(R)}}\frac{\mu(\{p_1\})}{\mu(R)}\frac{\mu(\{p_2\})}{\mu(R)}\frac{\mu(\{p_3\})}{\mu(R)}\frac{\mu(\{p_4\})}{\mu(R)}\geq\left(\frac{1-\theta}{4}\right)^4.
\]

Now, given \(k,\theta,r\), let $n=\left\lfloor\frac{3e\cdot 24^{1/4}}{1-\theta}re^{-\frac{k\theta}{4r}}\right\rfloor + 1$, and take $S=S_k$.
If \(\nu(\{R\in\cF:R\cap S_k=\varnothing\})\geq n\), then there are pairwise disjoint rectangles \(R_1,\cdots,R_n\) in $\cF$. We can construct a sequence $p_\bullet$ in \(\Phi_{S_k}\) by taking \(p_{4i-3},p_{4i-2},p_{4i-1},p_{4i}\in P\cap R_i\) such that \(\mu(\langle p_{4i-3},p_{4i-2},p_{4i-1},p_{4i}\rangle)\geq\theta\mu(R_i)\geq\theta/r\) for all $i$. There are \(n!\) orderings of these rectangles, so we obtain
\begin{align*}
\sum_{p_\bullet\in\Phi_{S_k}}\phi(p_\bullet)
&\geq n!\prod_{i=1}^{n}\sum_{\overset{p_{4i-3},\dots,p_{4i}\in P\cap R_i}{\mu(\langle p_{4i-3},\dots,p_{4i}\rangle)\geq \theta\mu(R_i)}}\mu(\{p_{4i-3}\})\mu(\{p_{4i-2}\})\mu(\{p_{4i-1}\})\mu(\{p_{4i}\})\\
&\geq n!\left(\frac{1-\theta}{4}\right)^{4n}\prod_{i=1}^{n}\mu(R_i)^{4}\geq n!\left(\frac{1-\theta}{4}\right)^{4n}\frac{1}{r^{4n}}.
\end{align*}
Since we have \(\sum_{p_{\bullet}\in\Phi_{S_k}}\phi(p_\bullet)\leq\frac{n! 24^{n}3^{4n}}{(4n)!}e^{-kn\theta/r}\), the above inequality gives us \[\frac{n!24^n3^{4n}}{(4n)!}e^{-kn\theta/r}\geq n!\left(\frac{1-\theta}{4}\right)^{4n}\frac{1}{r^{4n}}.\]
This implies, by applying the inequality \((4n)!\geq(4n/e)^{4n}\), that \[\frac{3e\cdot 24^{1/4}}{1-\theta}re^{\frac{-k\theta}{4r}}\geq n = \left\lfloor \frac{3e\cdot 24^{1/4}}{1-\theta}re^{\frac{-k\theta}{4r}}\right\rfloor + 1,\]
which is a contradiction. Therefore, it must be 
\[
\nu(\{R\in \cF:R\cap S=\varnothing\})\leq n-1=\left\lfloor \frac{3e\cdot 24^{1/4}}{1-\theta}re^{\frac{-k\theta}{4r}}\right\rfloor,
\]
as desired.
\end{proof}
\begin{remark*}
    Lemma~\ref{lem:boxes-packing-reduction} was motivated by the weak $\varepsilon$-net problem. The idea is to take a small sample so that the unhit heavy rectangles have a small packing number, and then use the known piercing bound of Correa et al.~\cite{correa2015independent} to meet the remaining rectangles with few additional points. Together, these points and the sample form a weak $\varepsilon$-net. This explains the seemingly unusual assumption $\mu(R)\geq1/r$ in the lemma: setting $r=1/\varepsilon$, it is exactly the condition that $R$ is $\varepsilon$-heavy.
\end{remark*}

\subsection{The planar bound and weighted weak \texorpdfstring{$\varepsilon$}{epsilon}-nets}
\noindent
We then give an upper bound on the piercing number of rectangles in terms of the the fractional piercing number and the packing number.
\begin{lemma}\label{lem:boxes-fractional-transference}
Let \(\mathcal{F}\) be a finite nonempty family of rectangles and \(g:[1,\infty)\to[1,\infty)\) be a nondecreasing function satisfying \(\tau^*(\mathcal{G})\leq\nu(\mathcal{G})g(\nu(\mathcal{G}))\) for every nonempty subfamily \(\mathcal{G}\subseteq\cF\). Then, for any \(\theta,\rho\in(0,1)\), $\tau(\cF)$ is bounded from above by
\[
\tau(\cF)\leq\frac{\tau^*(\cF)}{1-\rho}\left[1 + \frac{4}{\theta}\log\left(\frac{3e\cdot24^{1/4}g(\nu(\cF))}{(1-\theta)\rho}\right)\right]
\]
In particular, for \(\rho=\theta=1/2\), we have $
\tau(\cF)\leq 2\tau^*(\cF)\left[1 + 8\log\left(12e\cdot24^{1/4}g(\nu(\cF))\right)\right]$.
\end{lemma}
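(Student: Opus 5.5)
We build the piercing set iteratively, applying Lemma~\ref{lem:boxes-packing-reduction} repeatedly to a shrinking sequence of unpierced subfamilies. Set $\cF_0=\cF$, $t_0=\tau^*(\cF_0)$. At step $j$, if $\cF_j\neq\varnothing$, take an optimal fractional piercing of $\cF_j$. Normalize it to a finitely supported probability measure $\mu_j$ with $\mu_j(R)\geq 1/t_j$ for all $R\in\cF_j$, where $t_j=\tau^*(\cF_j)$. Apply Lemma~\ref{lem:boxes-packing-reduction} with $r=t_j$ and
\[
k_j=\left\lceil \frac{4t_j}{\theta}\log\left(\frac{3e\cdot 24^{1/4}g(\nu(\cF))}{(1-\theta)\rho}\right)\right\rceil .
\]
This gives a set $S_j$ with $|S_j|\leq k_j$. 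Let $\cF_{j+1}=\{R\in\cF_j:R\cap S_j=\varnothing\}$. Then $\nu(\cF_{j+1})\leq \frac{3e24^{1/4}}{1-\theta}t_je^{-k_j\theta/(4t_j)}\leq \rho\, t_j/g(\nu(\cF))$.

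Next we convert this packing bound into a bound on $t_{j+1}$. By hypothesis, $t_{j+1}\leq \nu(\cF_{j+1})g(\nu(\cF_{j+1}))$. Since $\cF_{j+1}\subseteq\cF$, we have $\nu(\cF_{j+1})\leq\nu(\cF)$, so monotonicity of $g$ gives $g(\nu(\cF_{j+1}))\leq g(\nu(\cF))$. Hence $t_{j+1}\leq \rho t_j$, and so $t_j\leq\rho^j\tau^*(\cF)$. One subtlety: $g$ is defined on $[1,\infty)$, but $\nu(\cF_{j+1})$ may be $0$. In that case $\cF_{j+1}=\varnothing$ and the process stops, which is consistent. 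Because $\nu\geq1$ on nonempty families, every nonempty $\cF_{j+1}$ has $\nu(\cF_{j+1})\geq 1$, and the hypothesis applies. Moreover $t_j\geq1$ whenever $\cF_j\neq\varnothing$, so geometric decay forces termination after finitely many steps. The union $\bigcup_j S_j$ then pierces $\cF$.

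For the total count, write $L=\log\left(\frac{3e\cdot24^{1/4}g(\nu(\cF))}{(1-\theta)\rho}\right)$ and sum:
\[
\tau(\cF)\leq\sum_j k_j\leq\sum_j\left(1+\frac{4t_j}{\theta}L\right).
\]
The geometric part contributes at most $\frac{\tau^*(\cF)}{1-\rho}\cdot\frac{4}{\theta}L$. The main obstacle is the $+1$ from each ceiling. Their total is the number of steps, which is about $\log_{1/\rho}\tau^*$, and we must bound it by $\frac{\tau^*(\cF)}{1-\rho}$.

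To handle this, bound the number of steps using $t_j\geq1$ for every nonempty step. Then $\sum_j 1\leq\sum_j t_j\leq \tau^*(\cF)/(1-\rho)$, and the total is at most $\frac{\tau^*(\cF)}{1-\rho}\left[1+\frac4\theta L\right]$, as claimed. To use $\lceil x\rceil\leq x+1$ cleanly, we need $L>0$ so that $k_j\geq1$. This holds because $g\geq1$ and $3e\cdot24^{1/4}/((1-\theta)\rho)>1$.

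The final claim follows by substituting $\rho=\theta=1/2$: then $\frac{1}{1-\rho}=2$, $\frac{4}{\theta}=8$, and $\frac{1}{(1-\theta)\rho}=4$, giving the constant $12e\cdot24^{1/4}$.
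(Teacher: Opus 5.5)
Your proof is correct and follows the paper's argument essentially verbatim: the same iteration with $k_j=\lceil c\,t_j\rceil$ and the same use of monotonicity of $g$ (with $\nu(\cF_{j+1})\le\nu(\cF)$) to get $t_{j+1}\le\rho\,t_j$, and the same absorption of each ceiling's $+1$ via $t_j\ge1$. One small correction: $\lceil x\rceil\le x+1$ holds for every real $x$, so $L>0$ is not needed for that step; what $L>0$ actually secures is $k_j\ge0$, which Lemma~\ref{lem:boxes-packing-reduction} requires.
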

\begin{proof}
We construct a decreasing sequence of families \(\cF=\cF_0\supsetneq\cF_1\supsetneq\cF_2\supsetneq\cdots\supsetneq\cF_{m}=\varnothing\) as follows.
Let \(c=\frac{4}{\theta}\log\left(\frac{3e\cdot24^{1/4}g(\nu(\cF))}{(1-\theta)\rho}\right)\).
If \(\cF_i\) is nonempty, then there exists a probability measure \(\mu_i\) with finite support such that \(\mu_i(R)\geq\frac{1}{\tau^*(\cF_i)}\) for every \(R\in\cF_i\). Applying Lemma~\ref{lem:boxes-packing-reduction} with \(r=\tau^*(\cF_i)\), \(k=\lceil c\tau^*(\cF_i )\rceil\) and given \(\theta\), there exists a point set \(S_i\) such that \(|S_i|\leq\lceil c\tau^*(\cF_i )\rceil\) and 
\begin{align*}
\nu(\{R\in\cF_i:R\cap S_i=\varnothing\})
&\leq\left\lfloor\frac{3e\cdot24^{1/4}}{1-\theta}\tau^*(\cF_i)e^{-\lceil c\tau^*(\cF_i)\rceil\theta/(4\tau^*(\cF_i))}\right\rfloor\\
&\leq\left\lfloor\frac{3e\cdot24^{1/4}}{1-\theta}\tau^*(\cF_i)e^{-c\theta/4}\right\rfloor\leq\frac{\rho}{g(\nu(\cF))}\tau^*(\cF_i).
\end{align*}
Define \(\cF_{i+1}=\{R\in\cF_i:R\cap S_i=\varnothing\}\).
Since \(\cF_{i+1}\subseteq\cF\) and \(g\) is nondecreasing, we have
\[
\tau^*(\cF_{i+1})\leq\nu(\cF_{i+1})g(\nu(\cF_{i+1}))\leq\frac{\rho\tau^*(\cF_i)}{g(\nu(\cF))}g(\nu(\cF_{i+1}))\leq\rho\tau^*(\cF_i)
\]
if \(\cF_{i+1}\not=\varnothing\), where \(\tau^*(\cF_{i+1})=0\leq\rho\tau^*(\cF_i)\) is still true for the case \(\cF_{i+1}=\varnothing\).
Hence, from the finiteness of \(\cF\), this process terminates after finitely many steps. Then, if \(\cF_m=\varnothing\), it means \(\cF_{m}=\{R\in\cF:R\cap(\bigcup_{i=0}^{m-1}S_i)=\varnothing\}\).
Thus \(\cup_{i=0}^{m-1}S_i\) is a piercing set of \(\cF\), and we have
\begin{align*}
\tau(\cF)&\leq\sum_{i=0}^{m-1}|S_i|\leq\sum_{i=0}^{m-1}\lceil c\tau^*(\cF_i)\rceil\leq\sum_{i=0}^{m-1}(c\tau^*(\cF_i) + 1)\\
&\leq\sum_{i=0}^{m-1}((c+1)\tau^*(\cF_i))\leq(c+1)\sum_{i=0}^{m-1}\rho^i\tau^*(\cF)\\
&\leq\frac{\tau^*(\cF)}{1-\rho}(1+c)=\frac{\tau^*(\cF)}{1-\rho}\left[1 + \frac{4}{\theta}\log\left(\frac{3e\cdot24^{1/4}g(\nu(\cF))}{(1-\theta)\rho}\right)\right]
\end{align*}
which completes the proof.
\end{proof}

Now we complete the proof of Theorem~\ref{thm:upper-bound-for-boxes-dim2}.
\begin{proof}[Proof of Theorem~\ref{thm:upper-bound-for-boxes-dim2}]
By Theorem~\ref{thm:boxes-fractional-packing}, we have $\tau^*(\mathcal{R})\leq A\nu(\mathcal{R})\log\log(e^e +\nu(\mathcal{R}))$ for every nonempty finite family \(\mathcal{R}\) of rectangles. Let \(g(t)=A\log\log(e^{e}+t)\). Then $g$ is a nondecreasing function on \([1,\infty)\) and \(g(1)>A\geq1\), so by applying Lemma~\ref{lem:boxes-fractional-transference}, we obtain
\begin{align*}
\tau(\mathcal R) 
&\leq 2\tau^*(\mathcal{R})\left[1+8\log \left( 12e\cdot24^{1/4}A \log\log(e^e+\nu(\mathcal R)) \right) \right]\\
&\leq2A\nu(\mathcal R) \log\log (e^e+\nu(\mathcal R) ) \left[1+8\log \left( 12e\cdot24^{1/4}A \log\log(e^e+\nu(\mathcal R)) \right) \right],
\end{align*}
as desired.
\end{proof}

For a probability measure $\mu$ on the plane and $0<\varepsilon\leq1$, a \emph{weighted weak $\varepsilon$-net} for rectangles is a finite set $N\subseteq\mathbb R^2$ meeting every rectangle $R$ with $\mu(R)\geq\varepsilon$. Its points need not belong to the support of $\mu$. As a corollary of the above proof, we obtain the following.
\begin{corollary}\label{cor:boxes-weighted-weak-net}
Let $\mu$ be a probability measure supported on a finite nonempty set $P\subseteq\mathbb R^2$. For every $r\geq1$, there is a weighted weak $(1/r)$-net $N$ for rectangles with
\[
 |N|\leq
 2r\left[1+8\log\left(12e\cdot24^{1/4}A\log\log(e^e+r)\right)\right],
\]
where \(A\) is the constant from Theorem~\ref{thm:boxes-fractional-packing}.
\end{corollary}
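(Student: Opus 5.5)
The plan is to rerun the iterative argument of Lemma~\ref{lem:boxes-fractional-transference}, but with the family of all $\varepsilon$-heavy rectangles in place of a given family $\cF$. The role of $\tau^*$ is then played directly by $r$, since $\mu$ itself witnesses $\mu(R)\geq 1/r$.

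First I reduce to a finite family. Since $P$ is finite, two rectangles containing the same subset of $P$ are interchangeable for our purpose. Each subset $Q\subseteq P$ contained in some rectangle can be represented by $\langle Q\rangle$, which is contained in every rectangle containing $Q$. So it suffices to meet the finite family
\[
\cF=\{\langle Q\rangle : Q\subseteq P,\ \mu(\langle Q\rangle)\geq 1/r\}.
\]
Any set meeting every member of $\cF$ meets every rectangle $R$ with $\mu(R)\geq 1/r$, because such an $R$ contains $\langle R\cap P\rangle\in\cF$. If $\cF$ is empty, take $N$ empty.

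Next I run the iteration. Set $\cF_0=\cF$ and let $\mu_0=\mu$, with $r_0=r$; then $\mu_0(R)\geq 1/r_0$ on $\cF_0$. At later steps, with $\cF_i\neq\varnothing$, let $\mu_i$ be the normalized optimal fractional piercing, so $r_i=\tau^*(\cF_i)$. Apply Lemma~\ref{lem:boxes-packing-reduction} with $\theta=1/2$, $k=\lceil c r_i\rceil$ and $c=8\log(12e\cdot 24^{1/4}g(\nu(\cF)))$, exactly as in the proof of Lemma~\ref{lem:boxes-fractional-transference}. This gives $\nu(\cF_{i+1})\leq \rho r_i/g(\nu(\cF))$ with $\rho=1/2$. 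Theorem~\ref{thm:boxes-fractional-packing}, with $g(t)=A\log\log(e^e+t)$, then yields $r_{i+1}=\tau^*(\cF_{i+1})\leq r_i/2$. Summing as before gives
\[
|N|\leq\sum_i (c+1)r_i\leq 2r(1+c).
\]

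The main obstacle is replacing $g(\nu(\cF))$ by $g(r)$, that is, bounding $\nu(\cF)\leq r$. This holds because pairwise disjoint members of $\cF$ each carry $\mu$-mass at least $1/r$ and $\mu$ is a probability measure, so at most $r$ of them fit. Since $g$ is nondecreasing, this gives the constant $8\log(12e\cdot24^{1/4}A\log\log(e^e+r))$. The only other point to check is that the first step may use $\mu$ with $r_0=r$ in place of $\tau^*(\cF)$; this is harmless, because $\tau^*(\cF)\leq r$ anyway (the weights $r\mu$ form a fractional piercing), and Lemma~\ref{lem:boxes-packing-reduction} only needs $\mu(R)\geq 1/r$. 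The resulting points all lie in the supports of the $\mu_i$, which are finite, so $N$ is finite.
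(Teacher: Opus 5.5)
Your proposal is correct and follows essentially the paper's proof. You reduce to the finite family of rectangles $\langle R\cap P\rangle$ with $\mu(R)\ge 1/r$; your family indexed by subsets $Q\subseteq P$ is exactly the paper's $\mathcal R$. You then use $\nu\le r$ and $\tau^*\le r$ (via the weights $r\mu$), and run the iteration of Lemma~\ref{lem:boxes-fractional-transference} with $\theta=\rho=1/2$ and $g(t)=A\log\log(e^e+t)$. The only cosmetic difference is that you start the iteration with $\mu$ itself and $r_0=r$, whereas the paper just substitutes $\tau^*(\mathcal R)\le r$ and $\nu(\mathcal R)\le r$ into the already-established bound $\tau(\mathcal R)\le 2\tau^*(\mathcal R)\,[1+8\log(12e\cdot24^{1/4}g(\nu(\mathcal R)))]$.
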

\begin{proof}
By disregarding $p\in P$ with $\mu(\{p\})=0$, we may assume that $\mu(\{p\})>0$ for all $p\in P$. For each rectangle $R$, we clearly have $\mu(\langle P \cap R\rangle) = \mu(R)$.
Note that when $P\cap R \neq \varnothing$, each side of the rectangle $\langle P \cap R\rangle$ contains at least one point in $P \cap R$. On the other hand, the rectangle $\langle P \cap R\rangle$ itself can be determined by exactly one point of $P$ on each side. Thus the number of possible rectangles obtained by $\langle P \cap R\rangle$ is at most $|P|^4$. In particular, \(\mathcal{R}=\{\langle([a,b]\times[c,d])\cap P\rangle:a,b,c,d\in\mathbb{R},a\leq b, c\leq d,\mu([a,b]\times[c,d])\geq 1/r\}\) is a finite family of rectangles. Also, by its definition, a piercing set of \(\mathcal{R}\) is a weighted weak \((1/r)\)-net for rectangles.

Since \(\mu(R)\geq1/r\) for every \(R\in\mathcal{R}\), it must be \(\nu(\mathcal{R})\leq r\). Also, for the function \(w:\mathbb{R}^2\to[0,\infty)\) defined as \(w(p)=r\mu(\{p\})\), we have \(\sum_{p\in R}w(p)=r\mu(R)\geq1\) for every \(R\in\mathcal{R}\). Therefore, \(w\) is a fractional piercing with $\sum_{p\in P}w(p) = r$, showing that \(\tau^*(\mathcal{R})\leq r\).
Now we apply the inequality \[\tau(\mathcal R) \leq 2\tau^*(\mathcal{R})\left[1+8\log \left( 12e\cdot24^{1/4}A \log\log(e^e+\nu(\mathcal R)) \right) \right]\] obtained in the proof of Theorem~\ref{thm:upper-bound-for-boxes-dim2}. This implies $\tau(\mathcal R)\leq 2r\left[1+8\log\left(12e\cdot24^{1/4}A\log\log(e^e+r)\right)\right]$.
\end{proof}

\section{Piercing subtree-products}
For every integer $d \geq 1$ and nonempty finite family $\mathcal{F}$ of subtree-products of $d$ trees, the upper bound $\tau(\cF) \leq \nu(\cF)\binom{\lfloor\log_2\nu(\cF)\rfloor + d - 1}{d - 1}$ in Theorem~\ref{thm:upper-bound-for-subtrees} was proved at the end of Section~\ref{prelim:generalupperbound}. It remains to prove Theorem~\ref{thm:lower-bound-for-subtrees}, which implies that the upper bound cannot be improved asymptotically.

For an integer $h\geq0$, let $T^h$ denote the rooted \emph{perfect binary tree of height $h$} whose vertices are all binary sequences of length at most \(h\). The root of $T^h$ is the empty sequence $\epsilon$, and each vertex $v$ of length at most $h-1$ has children $v0$ and $v1$. For example, consider a binary sequence $v=10$ of length $2$. This is a child of a binary sequence $1$ of length $1$, and has children $v0=100$ and $v1=101$. The binary sequences of length $h$ are precisely the leaves of $T^h$. We denote by $\Lambda_h$ the set of all leaves of $T^h$. For a vertex $u$, let $\ell(u)$ be the length of it, which equals its \emph{depth}, or the distance from the root. Let $T[u]$ be the subtree induced by $u$ and all its descendants. Note that \(|V(T[u])\cap \Lambda_h|=2^{h-\ell(u)}\).

\begingroup
\def\thetheorem{\ref{thm:lower-bound-for-subtrees}}
\begin{theorem}
For every integer $d\geq1$, there is a constant $c_d>0$ such that the following holds: for every positive integer $h$, there is a finite family $\mathcal{F}$ of subtree-products of $d$ trees such that $\nu(\cF)=2^{h+1}$ and $\tau(\cF)\geq c_d 2^{h+1}(h+1)^{d-1}$.
\end{theorem}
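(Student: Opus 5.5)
The plan is to build $\cF$ inside $T^h\square\cdots\square T^h$ ($d$ copies) from members indexed by \emph{level vectors} $(\ell_1,\ldots,\ell_d)$ with $\ell_1+\cdots+\ell_d=h$, in the spirit of dyadic boxes of volume $2^{-h}$. I would then bound $\nu$ from above by a volume argument and $\tau$ from below by double counting. The construction is not fixed yet: the first candidate below fails, and the main work is the corrected version.

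\emph{First candidate and why it fails.} Take all $T[u_1]\square\cdots\square T[u_d]$ with $\ell(u_i)=\ell_i$ and $\sum_i\ell_i=h$. Give $\Lambda_h^d$ the uniform measure, so each member has measure $2^{-h}$. Two members are disjoint exactly when, in some coordinate, $u_i$ and $u_i'$ are incomparable. This is the same as disjointness of the corresponding dyadic boxes, so $\nu\le 2^h$. There are $2^h N$ members, where $N=\binom{h+d-1}{d-1}$. A vertex tuple $(v_1,\ldots,v_d)$ lies in at most one member per level vector, namely the one whose $u_i$ are the ancestors of $v_i$ at depth $\ell_i$, so it meets at most $N$ members. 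Double counting then gives only $\tau\ge 2^h$. This is sharp when suitable digital nets exist, so pure down-sets $T[u]$ reproduce the box situation and cannot give the extra $(h+1)^{d-1}$ factor.

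\emph{Corrected construction.} I will use a feature of trees that boxes lack: a subtree can branch. In coordinates $2,\ldots,d$ I would replace $T[u]$ by a subtree that
\begin{itemize}[nosep]
\item still contains $u$, so that the projections of any two members with comparable indices still meet and the packing stays $O(2^h)$;
\item captures only a sparse, level-dependent part of the leaves below $u$, for example $u$ together with the path to $u1$ and then all of $T[u1]$, or a "comb" following a prescribed bit pattern.
\end{itemize}
The patterns will be chosen so that a fixed leaf tuple lies in members for only $O_d(1)$ level vectors, instead of all $N$ of them. Internal vertices should be no better than leaves, since each lies in a sub-collection of what its leaf descendants hit. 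I would also take the family at two scales, $h$ and $h+1$ or two halves of the root, so that the packing number is \emph{exactly} $2^{h+1}$. Exhibiting that many pairwise disjoint members gives $\nu\ge 2^{h+1}$.

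\emph{Bounds on $\nu$ and $\tau$.} For the upper bound on $\nu$, I will use the fractional relaxation. Choose a measure on vertex tuples under which every member has measure at least $2^{-(h+1)}$; then any packing has size at most $2^{h+1}$. For the lower bound on $\tau$, the sparsity condition means every point meets at most $K_d$ members, where $K_d$ depends only on $d$. The family has about $2^{h}\binom{h+d-1}{d-1}$ members, so
\[
\tau(\cF)\ \ge\ \frac{|\cF|}{K_d}\ \ge\ c_d\,2^{h+1}(h+1)^{d-1}.
\]

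\emph{Main obstacle.} The difficulty is the tension between the two requirements. Each point must lie in only $O(1)$ members across different level vectors, while any two members from different level vectors must still intersect often enough that $\nu$ stays $O(2^h)$. I would first try the "branch at $u$, then follow the bit-pattern $1\cdots1$" subtrees and check the disjointness criterion coordinatewise. The reason this might work where boxes fail is that for subtrees of a binary tree, "non-disjoint" can hold via a shared ancestor even when the leaf sets are disjoint.
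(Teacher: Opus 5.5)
\textbf{There is a genuine gap.} It goes beyond the construction being unspecified: the two halves of your plan contradict each other for \emph{any} choice of subtrees. Suppose $\mu$ is a probability measure with $\mu(F)\ge 2^{-(h+1)}$ for all $F\in\cF$, and every point lies in at most $K_d$ members. Then
\[
|\cF|\,2^{-(h+1)}\le\sum_{F\in\cF}\mu(F)=\sum_{z}\mu(\{z\})\,\bigl|\{F\in\cF: z\in F\}\bigr|\le K_d,
\]
so $|\cF|/K_d\le 2^{h+1}$, and your double count can never exceed $2^{h+1}$.

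The underlying reason is this. The bound $\tau\ge|\cF|/\Delta$ is really a lower bound on $\nu^*=\tau^*$, since it comes from the uniform fractional packing. Certifying $\nu\le 2^{h+1}$ by a measure is really an upper bound $\tau^*\le 2^{h+1}$. So what must be exhibited is an integrality gap between $\tau^*$ and $\tau$. No argument that remains valid for fractional piercings, such as ``every point meets $O_d(1)$ members'', can detect such a gap.

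Dropping the measure bound does not save the plan either. Bounded point-degree together with $|\cF|=\Omega(2^h h^{d-1})$ would force $\nu^*/\nu=\Omega(h^{d-1})$, a much stronger property for which your sketch gives no mechanism. Relatedly, your claim that internal vertices are no better than leaves is false for branching subtrees. The shared ancestors you rely on to keep members intersecting are exactly the points lying in many members.

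\textbf{How the paper proceeds.} Your first candidate and your branching intuition are both on target, but the paper uses them differently. It takes $\cF_h$ to be \emph{all} subtree-products in $(T^h)^{\square d}$ containing at least $2^{h(d-1)-1}$ leaf tuples. Leaf counting then gives $\nu\le 2^{h+1}$, and the disjoint members $\{a\}\square T[b]\square T^h\square\cdots\square T^h$ ($a\in\Lambda_h$, $\ell(b)=1$) give equality.

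Your dyadic boxes $\mathcal C_h$ serve only as test objects. They carry weights $w_C(z)=\min_i 2^{\ell(u_i)-\ell(x_i)}$ for $z=(x_1,\ldots,x_d)\in V(C)$, and these satisfy $\sum_{C\in\mathcal C_h}w_C(z)\le K_d$ for every point $z$.

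The key step uses that $P$ is an actual point set. Suppose $\sum_{z\in P}w_C(z)<1/2$ for some $C=\prod_i T[u_i]$. For each $z\in P\cap V(C)$, delete the branch $T[x]$ from the factor $T[u_j]$ realizing the minimum. Each $T[u_j]-T[x]$ is still a subtree; this is exactly where branching is used. The result is a subtree-product missing $P$ that keeps more than half of the leaf tuples of $C$, i.e.\ a member of $\cF_h$ not pierced by $P$.

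Hence $\sum_{z\in P}w_C(z)\ge 1/2$ for each of the $2^h\binom{h+d-1}{d-1}$ boxes, which yields $|P|\ge 2^{h-1}\binom{h+d-1}{d-1}/K_d$. This inequality fails for fractional piercings. For example, the uniform weight $2^{h+1-hd}$ on leaf tuples gives only $2^{1-h}$ on any box with some $\ell(u_i)=0$. That failure is exactly why the argument can exceed $\tau^*$.
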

\addtocounter{theorem}{-1}
\endgroup
\begin{proof}
We first observe that $c_1 = 1$ by taking any tree \(T\) with exactly \(2^{h+1}\) vertices and the family \(\cF=\{\{v\}:v\in V(T)\}\). Clearly, we have \(\tau(\cF)=\nu(\cF)=2^{h+1}\).
Now let $d\geq2$.  Let $\mathbb{T}$ be the Cartesian product of $d$ copies of $T^h$, and let $\Lambda_h^d$ be the set of tuples of leaves in $\mathbb{T}$. Define a family
\[
    \cF_h=\left\{A_1\square\cdots\square A_d\subseteq\mathbb{T}:
    \begin{array}{l}
        A_i\text{ is a nonempty subtree of }T^h\ (i\in[d]),\\
        \displaystyle\prod_{i=1}^{d}|V(A_i)\cap\Lambda_h|
            \geq 2^{h(d-1)-1}
    \end{array}\right\}.
\]
Since every member of $\cF_h$ contains at least $2^{h(d-1)-1}$ points of $\Lambda_h^d$, we have $\nu(\cF_h)\leq\frac{2^{hd}}{2^{h(d-1)-1}}=2^{h+1}$. Conversely, the members of the form $\{a\}\square T[b]\square T^h\square\cdots\square T^h\subseteq\mathbb{T}$ with $a\in\Lambda_h$ and $\ell(b)=1$ are pairwise disjoint and each of such members contains exactly $2^{h-1+h(d-2)}=2^{h(d-1)-1}$ points of $\Lambda_h^d$. Since there are exactly $2^{h+1}$ such members, we have $\nu(\cF_h)=2^{h+1}$.

Consider a family $\mathcal{C}_h=
    \left\{T[u_1]\square\cdots\square T[u_d]\subseteq\mathbb{T}:
        u_i\in V(T^h),\ \sum_{i=1}^{d}\ell({u_i})=h\right\}$.
Clearly, there are $\binom{h+d-1}{d-1}$ many possibilities of $\ell(u_1),\ldots,\ell(u_d)$ such that $\sum_{i=1}^{d}\ell({u_i})=h$. For each of such possibilities, there are $2^{\sum_i\ell(u_i)}=2^h$ choices of the vertices $u_1,\ldots,u_d$ such that $\prod_{i=1}^{d}T[u_i]\in \mathcal{C}_h$. Moreover, each \(C\in\mathcal{C}_h\) has $\prod_{i=1}^d2^{h-\ell(u_i)}=2^{h(d-1)}$ many points of $\Lambda_h^d$. In summary, we have
\[
    |\mathcal C_h|=2^h\binom{h+d-1}{d-1},
    \text{ and } |V(C)\cap \Lambda_h^d|=2^{h(d-1)} \text{ for any } C\in\mathcal C_h.
\]

For $z=(x_1,\ldots,x_d)\in V(\mathbb{T})$ and $C=\prod_{i=1}^{d}T[u_i]$, define
\[
    w_C(z)=
    \begin{cases}
        \min\{2^{\ell(u_i)-\ell(x_i)}:1\leq i\leq d\},&z\in V(C),\\
        0,&z\notin V(C).
    \end{cases}
\]
If $\sum_{i=1}^d\ell(x_i)<h$, then \(\sum_{C\in\mathcal{C}_h}w_C(z)=0\).
Otherwise, let $s=\sum_{i=1}^d\ell(x_i)-h=\sum_{i=1}^d(\ell(x_i)-\ell(u_i))\geq0$. For every \(\prod_{i=1}^dT[u_i]\in\mathcal{C}_h\) containing~\(z\), either \(u_i=x_i\) or \(u_i\) is an ancestor of \(x_i\), so we have \(\ell(u_i)\leq \ell(x_i)\). Also, for every \(d\)-tuple \((a_1,\dots,a_d)\) of nonnegative integers with \(a_i\leq\ell(x_i)\) and \(\sum_{i=1}^da_i=s\), there exists unique vertex \(u_i'\) with \(\ell(u_i')=\ell(x_i)-a_i\) such that either \(u_i'=x_i\) or \(u_i'\) is an ancestor of \(x_i\). This implies that there is a one-to-one correspondence between \(C\in\mathcal{C}_h\) containing $z$ and \(d\)-tuple \((a_1,\dots,a_d)\) of nonnegative integers with \(a_i\leq\ell(x_i)\) and \(\sum_{i=1}^da_i=s\). Then we have
\begin{align*}
\sum_{C\in\mathcal{C}_h}w_C(z)&=\sum_{\overset{0\leq a_i\leq \ell(x_i),\;{i=1\dots d}}{a_1+\cdots+a_d = s}}\min\{2^{-a_i}:1\leq i\leq d\}\\&\leq \sup\{\sum_{\overset{a_1,\cdots,a_d\geq 0}{a_1+\cdots+a_d = s}}\min\{2^{-a_i}:1\leq i\leq d\} : s\geq 0\}=:K_d.
\end{align*}
Note that \(K_d\le \sup_{s\in\mathbb{Z}_{\geq0}} \binom{s+d-1}{d-1}2^{-s/d}\leq\sum_{t=0}^\infty\binom{t+d-1}{d-1}2^{-t/d}=(1-2^{-1/d})^{-d}<\infty\) for all $d\geq2$.

Let \(P\subseteq V(\mathbb{T})\) be a piercing set for \(\cF_h\). We claim that 
\[\sum_{z\in P}w_C(z)\geq 1/2\;\;\;\text{for all}\;\;\;C\in\mathcal{C}_h.\]
If this is true, then we have
\[
    \frac{1}{2}\times2^{h}\binom{h+d-1}{d-1}
    \leq\sum_{C\in\mathcal C_h}\sum_{z\in P}w_C(z)
    =\sum_{z\in P}\sum_{C\in\mathcal C_h}w_C(z)
    \leq K_d|P|.
\]
Since $\binom{h+d-1}{d-1} \geq \frac{(h+1)^{d-1}}{(d-1)!}$ and $\nu(\cF_h)=2^{h+1}$, we obtain
\[
\begin{aligned}
    \tau(\cF_h)
    \geq\frac{2^{h-1}}{K_d}\binom{h+d-1}{d-1}\geq\frac{2^{h-1}(h+1)^{d-1}}{K_d(d-1)!}=\frac{\nu(\cF_h)
        \bigl(\log_2\nu(\cF_h)\bigr)^{d-1}}
        {4K_d(d-1)!}.
\end{aligned}
\]
Thus the statement of the theorem holds with $c_d={1}/({4K_d(d-1)!)}>0$, which depends only on $d$.

It remains to prove the claim that \(\sum_{z\in P}w_C(z)\geq 1/2\) for all \(C\in\mathcal{C}_h\). Suppose not. Then there exists some $C=\prod_{i=1}^{d}T[u_i]\in\mathcal{C}_h$ such that $\sum_{z\in P}w_C(z)<1/2$. Since \(w_C(u_1,\cdots, u_d)=1\), it must be \((u_1,\cdots,u_d)\not\in P\). 
Let $P\cap V(C) = \{z_1,\ldots,z_m\}$ and 
\[\mathbb{S}_i=T[u_1]\square\cdots\square T[u_{j-1}]\square (T[u_j]-T[x_{i,j}])\square T[u_{j+1}]\square\cdots\square T[u_d],\] 
where $z_i = (x_{i,1},\ldots,x_{i,d})$ and $w_C(z_i) = 2^{\ell(u_j)-\ell(x_{i,j})}$. If there are several $j$'s satisfying $w_C(z_i) = 2^{\ell(u_j)-\ell(x_{i,j})}$, then just take one such $j$. Let $\mathbb{T}_0 = T[u_1]\square\cdots\square T[u_d]$, and for $1\leq i\leq m$, let $\mathbb{T}_{i} = \mathbb{T}_{i-1} \cap \mathbb{S}_i$. 
In this process, from the definition of $w_C$, since \((u_1,\cdots,u_d)\not\in P\), it never happens that $x_{i,j}=u_j$. This implies $(u_1,\ldots,u_d)\in \mathbb{T}_m$. In particular, we have $\mathbb{T}_m\neq\varnothing$. On the other hand, this process removes all members of $P$ contained in $C$, so we have $V(\mathbb{T}_m) \cap P = \varnothing$.
We will show that $|V(\mathbb{T}_m)\cap\Lambda_h^d | \geq 2^{h(d-1)-1}$, meaning that $\mathbb{T}_m \in \cF_h$. This leads a contradiction to the assumption that $P$ is a piercing set of~$\cF_h$.

From the equality $|V(\mathbb{T}_m)\cap\Lambda_h^d|=|V(C)\cap\Lambda_h^d|-\sum_{i=1}^{m}(|V(\mathbb{T}_{i-1})\cap\Lambda_h^d|-|V(\mathbb{T}_i)\cap\Lambda_h^d|)$ and the construction of $\mathbb{T}_i$'s, we have
\[1-\frac{|V(\mathbb{T}_m)\cap\Lambda_h^d|}{|V(C)\cap\Lambda_h^d|} = \sum_{i=1}^{m}\frac{|V(\mathbb{T}_{i-1})\cap\Lambda_h^d|-|V(\mathbb{T}_i)\cap\Lambda_h^d|}{|V(C)\cap\Lambda_h^d|} \leq  \sum_{i=1}^{m}\frac{|(V(C)\setminus V(\mathbb{S}_i))\cap\Lambda_h^d|}{|V(C)\cap\Lambda_h^d|}=\sum_{i=1}^m w_C(z_i)<\frac{1}{2},\]
where the last inequality is from the assumption. It follows that \[|V(\mathbb{T}_m)\cap\Lambda_h^d|>\frac{1}{2}|V(C)\cap\Lambda_h^d|=2^{h(d-1)-1}.\]
This completes the proof.
\end{proof}

\section{Remark}
\noindent
As mentioned in Section~\ref{sec:intro}, for rectangles, there is a gap between the upper bound $\tau=O(\nu(\log\log\nu)(\log\log\log\nu))$ and the linear lower bound. Another natural question arises for more general set systems, dealing with special type of convex sets. Here is one direction.

An \emph{abstract simplicial complex} $X$ with vertex set $V$ is a collection of subsets of $V$ that is closed under taking subsets: if $A \in X$ and $B \subset A$, then $B \in X$. Each member in $X$ is called a \emph{simplex}, or a \emph{face}, of $X$, and the dimension of $\sigma \in X$ is $\dim\sigma=|\sigma|-1$.
Given a finite family $\cF$ of nonempty sets, the \emph{nerve}, denoted by $N(\cF)$, is the complex on $\cF$ defined by \[N(\cF)=\{\mathcal{H} \subseteq \cF : \bigcap_{A\in\mathcal{H}}A \neq \varnothing\},\]
where the intersection of an empty family is regarded as the whole ground set. A complex that can be obtained as the nerve of a finite family of convex sets in $\mathbb{R}^d$ is said to be \emph{$d$-representable}. By the nerve theorem, every $d$-representable complex and its induced subcomplexes have trivial homology in dimension $d$ or greater. Here, the homology takes coefficient from a field. A complex on a finite vertex set with such homological property is called a \emph{$d$-Leray complex}. 

It is known that $1$-Leray complexes are precisely the complexes that can be obtained as the nerve of a finite family of subtrees in a tree. In this language, the nerve of boxes can be regarded as the intersection of $1$-representable complexes, and the nerve of subtree-products can be regarded as the intersection of $1$-Leray complexes. What we have studied in this paper can be translated as the relation between the maximal number of vertices containing no $1$-dimensional simplices and the minimal number of simplices whose union is equal to the vertex set of the complex. It is natural to ask what happens between the maximal number of vertices containing no $d$-dimensional simplices and the minimal number of simplices covering the vertex set in the intersection of $d$-representable/Leray complexes. 

We leave an explicit question for the intersection of two $2$-representable complexes, or the nerve of a finite family of sets of the form $A \times B \subset \mathbb{R}^2 \times \mathbb{R}^2$ where $A$ and $B$ are convex sets in the plane. For $p\geq3$, let $F(p)$ be the minimal positive integer such that every finite family of convex sets in the plane where every $p$ members contains a triple having a point in common can be pierced by $F(p)$ points. The current best upper bound on $F(p)$ is that $F(p)=O_\delta(p^{3+\delta})$ for every $\delta>0$, following from \cite{KellerSmorodinskyTardos2018IsraelJMath} and \cite{Rubin2022JACM}.
Analogously, for $p\geq3$, let $G(p)$ be the minimal positive integer such that every finite family of sets of the form $A \times B \subset \mathbb{R}^2 \times \mathbb{R}^2$ where $A$ and $B$ are convex sets in the plane where every $p$ members contains a triple having a point in common can be pierced by $G(p)$~points. An obvious bound is $F(p)\leq G(p)\leq (F(p))^2$. The first step toward understanding the intersection pattern of such objects in terms of piercing numbers could be the following:
\begin{question}
    Is $G(p) = o(F(p)^2)$ as $p\to\infty$?
\end{question}

\section*{Use of Generative-AI tools declaration}
\noindent
ChatGPT-6 Astra (OpenAI) was used to assist with proof development and manuscript preparation. The authors developed the initial strategy underlying the proof of Theorem~\ref{thm:upper-bound-for-boxes}. Subsequent discussions with ChatGPT contributed to the development and revision of the proofs, and the construction in Theorem~\ref{thm:lower-bound-for-subtrees} was initially suggested by ChatGPT. The authors have verified all arguments and take full responsibility for the final content.

\bibliographystyle{abbrv}
\bibliography{references}

\end{document}